\newenvironment{breakablealgorithm}
  {% \begin{breakablealgorithm}
   \begin{center}
     \refstepcounter{algorithm}% New algorithm
     \hrule height.8pt depth0pt \kern2pt% \@fs@pre for \@fs@ruled
     \renewcommand{\caption}[2][\relax]{% Make a new \caption
       {\raggedright\textbf{\ALG@name~\thealgorithm} ##2\par}%
       \ifx\relax##1\relax % #1 is \relax
         \addcontentsline{loa}{algorithm}{\protect\numberline{\thealgorithm}##2}%
       \else % #1 is not \relax
         \addcontentsline{loa}{algorithm}{\protect\numberline{\thealgorithm}##1}%
       \fi
       \kern2pt\hrule\kern2pt
     }
  }{% \end{breakablealgorithm}
     \kern2pt\hrule\relax% \@fs@post for \@fs@ruled
   \end{center}
  }
    \theoremstyle{nonumberplain}
    \newtheorem{proof}{Proof}
\newtheorem{theorem}{Theorem}[section]
\newtheorem{lemma}{Lemma}[section]
\newtheorem{definition}{Definition}[section]
\newtheorem{fact}{Fact}[section]
\newtheorem{claim}{Claim}[section]
\newtheorem{subclaim}{Subclaim}%[subsection]
\newtheorem{problem}{Problem}[section]
\newtheorem{conjecture}{Conjecture}[section]
{%% Environment of Remark
    \theoremheaderfont{\bfseries}
    \theorembodyfont{\normalfont}
    
}
\newcommand{\RNum}[1]{\uppercase\expandafter{\romannumeral #1\relax}}
\begin{document}

\title{The generalizations of Hamiltonian in oriented graphs\thanks{The author's work is supported by NNSF of China (No.12071260,11671232)}}

\author{Jia Zhou, Zhilan Wang, Jin Yan\thanks{Corresponding author. E-mail adress: yanj@sdu.edu.cn}  \unskip\\[2mm]
School of Mathematics, Shandong University, Jinan 250100, China}

\date{}
\maketitle

%a prescribed cycle-factor with
%MS+++++++++++++++++++++ Abstract +++++++++++++++++++++++++
\begin{abstract}
%Given a digraph $D$, a prescribed cycle-factor in $D$ is a set of disjoint cycles with given lengths that covers all vertices of it.

An oriented graph is an orientation of a simple graph. In 2009, Keevash, K\"{u}hn and Osthus proved that every sufficiently large oriented graph $D$ of order $n$ with $(3n-4)/8$ is Hamiltonian. Later, Kelly, K\"{u}hn and Osthus showed that it is also pancyclic. Inspired by this, we show that for any given constant $t$ and positive integer partition $n = n_1 + \cdots + n_t$, if $D$ is an oriented graph on $n$ vertices with minimum semidegree at least $(3n-4)/8$, then it contains $t$ disjoint cycles of lengths $n_1,\ldots , n_t$. Also, we determine the bounds on the semidegree of sufficiently large oriented graphs  that are strongly Hamiltonian-connected, $k$-ordered Hamiltonian and spanning $k$-linked.

%Hamiltonian is one of the most central notions in graph theory. An oriented graph is an orientation of a simple graph. In this paper, we will discuss several problems related to the generalization of Hamiltonian in oriented graphs. In more detail, we show that for any given constant $t$ and positive integer partition $n = n_1 + \cdots + n_t$, if $D$ is an oriented graph on $n$ vertices with minimum semidegree at least $(3n-4)/8$, then it contains $t$ disjoint cycles of lengths $n_1,\ldots , n_t$. Furthermore, we determine some relevant bounds on the semidegree of sufficiently large oriented graphs  that are strongly Hamiltonian-connected, $k$-ordered Hamiltonian and spanning $k$-linked.
\end{abstract}
%we also consider the strongly Hamiltonian-connected problem, the $k$-ordered Hamiltonian problem and the $k$-linkage problem in oriented graphs.%In addition, every sufficient large oriented graph $D$ on $n$ vertices with $\delta^0(D)\geq 3n/8+9$, then the oriented graph $D^\prime$ obtained from $D$ by adding a vertex $x$ such that $d^0_D(x)\geq 3n/16$ is Hamiltonian.

{\noindent\small{\bf Keywords: }Oriented graphs; semidegree; cycle-factor; Hamiltonian}

\vspace{1ex}
{\noindent\small{\bf AMS subject classifications.} 05C20, 05C38, 05C70}%自己查学科分类

\section{Introduction}
A digraph $D$ is \emph{Hamiltonian} if $D$ contains a cycle which encounters every vertex only once. Hamiltonian is one of the most central notions in graph theory, and has been intensively studied by numerous researchers in recent decades. One of the sufficient conditions for Hamiltonian in digraphs was established by Ghouila-Houri \cite{Ghouila(1960)}. He confirmed that any digraph $D$ on $n$ vertices with $\delta^0(D)\geq n/2$ is Hamiltonian, where \emph{the minimum semidegree} $\delta^0(D) = \min \{\delta^+(D), \delta^-(D) \}$. However, the situation is complicated when one considers Hamiltonian of \emph{oriented graphs}, which is a digraph that can be obtained from a (simple) undirected graph by orienting its edges. The question concerning the analogue of the existence of the Hamiltonian cycle in oriented graphs was raised by Thomassen \cite{Thomassen(1979)}. In 2009, Keevash, K\"{u}hn and Osthus \cite{Keevash(2009)} gave an exact answer to the question as follows.
\begin{theorem}\label{the1}
\cite{Keevash(2009)} There exists a number $n_0$ so that any oriented graph $D$ on $n\geq n_0$ vertices with $\delta^0(D)\geq (3n-4)/8$ is Hamiltonian.
\end{theorem}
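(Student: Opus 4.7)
The plan is to follow the by-now standard template for tight Dirac-type problems, namely to split into an \emph{extremal} and a \emph{non-extremal} case, with the regularity-and-absorption method handling the latter. The sharpness of the bound $(3n-4)/8$ is witnessed by a three-block construction (three nearly equal classes $A,B,C$ with all edges oriented $A\to B\to C\to A$, together with a carefully chosen tournament inside each class), and the parameter $\varepsilon$ that distinguishes the two cases will measure how close $D$ is, in edit distance of its orientation, to such a blow-up.

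For the non-extremal case I would first apply the directed version of Szemer\'edi's regularity lemma (Alon--Shapira, or Frieze--Kannan) to $D$, then pass to the reduced oriented multidigraph $R$ by keeping an edge of $R$ whenever the corresponding pair is $\varepsilon$-regular with density above some threshold $d$. A standard averaging shows $\delta^0(R)\ge (3/8-O(\varepsilon))|R|$, so a \emph{robust} form of Ghouila-Houri-for-oriented-graphs (itself proved in the non-extremal regime by a rotation/extension argument, using that a dense regular pair contains directed paths of every reasonable length between prescribed endpoints) yields a Hamilton cycle in $R$. The second ingredient is the \emph{absorbing method}: I would build a short directed path $P_{\mathrm{abs}}$ such that any small set $U\subseteq V(D)\setminus V(P_{\mathrm{abs}})$ can be inserted into it, preserving the endpoints. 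The existence of enough local ``absorbers'' (short gadgets that can absorb any single vertex) follows from the semidegree condition together with a counting/probabilistic selection. Once $P_{\mathrm{abs}}$ is fixed, one walks through the Hamilton cycle of $R$, using the regular pairs to stitch long directed paths covering almost all vertices, and finally absorbs the leftover $o(n)$ vertices through $P_{\mathrm{abs}}$ to close up the Hamilton cycle.

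For the extremal case, where $D$ is $\varepsilon$-close to the extremal blow-up on parts $A,B,C$, I would argue directly. The key structural observation is that the semidegree condition forces the three parts to be essentially balanced of size $n/3$ and forces almost all $A\!\to\! B$, $B\!\to\! C$, $C\!\to\! A$ edges to be present. Exceptional vertices (those with abnormal in/out-neighbourhoods) are few, and one inserts them one at a time into a cyclic traversal $A\to B\to C\to A\to\cdots$, using the $(3n-4)/8$ semidegree bound to guarantee a matching in the relevant bipartite pieces via Hall's theorem. The tightness of the $-4$ correction in $(3n-4)/8$ appears precisely here, in balancing the three parts.

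I expect the main obstacle to be the extremal case: the analysis of how the semidegree condition controls the exceptional vertices, and in particular the verification that the cyclic $A\to B\to C$ traversal really can be made spanning for every configuration that meets the bound, is delicate and is where the tightness $(3n-4)/8$ (rather than $3n/8$) has to be used. A secondary, more technical obstacle is proving the robust connectivity lemma that allows prescribed-endpoint Hamilton paths inside super-regular pairs of oriented graphs; without such a lemma the absorbing path cannot be closed up in the non-extremal case.
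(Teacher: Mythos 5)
Your overall framework—regularity lemma, extremal/non-extremal dichotomy, Blow-up Lemma and absorption/stitching in the non-extremal case—matches the template Keevash, K\"uhn and Osthus actually follow, and the non-extremal side of your sketch is essentially on the right track (though KKO phrase the non-extremal hypothesis in terms of the reduced graph being a \emph{robust outexpander}, as in Lemma~\ref{lemma3}, rather than edit-distance to a blow-up). However, there is a genuine and consequential error in your description of the extremal configuration, and since you correctly identify the extremal case as the crux of the argument, this error undermines the part of the proof where the $(3n-4)/8$ bound actually has to be used.

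You assert that the sharpness is witnessed by a three-block construction $A\to B\to C\to A$ with tournaments inside each class. This cannot be the extremal structure: with $|A|=|B|=|C|\approx n/3$ such a digraph has minimum semidegree only about $n/3+\tfrac12\cdot n/3/2 < 3n/8$ is already questionable, but more fundamentally the cyclic blow-up $A\to B\to C\to A$ is \emph{trivially} Hamiltonian (interleave the three classes), and inserting tournaments only increases connectivity. The actual extremal family $\mathcal{F}$, described in Definition~\ref{def4} and Figure~\ref{tu1} of this paper, is a \emph{four}-partite structure $(D_1,D_2,D_3,D_4)$ with $|D_i|\approx n/4$, cyclic arcs $D_i\to D_{i+1}$, dense tournaments inside $D_1$ and $D_3$, and roughly balanced bidirectional arcs between $D_2$ and $D_4$. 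A vertex in, say, $D_1$ then has out-degree about $|D_2|+\tfrac12|D_1|\approx 3n/8$, which is exactly where the $3n/8$ threshold comes from. The obstruction to Hamiltonicity in this family is the imbalance $|D_2|\neq|D_4|$, and the $-4$ correction in $(3n-4)/8$ is used precisely to force this imbalance to zero (after a small number of vertex/path rearrangements), after which the Blow-up Lemma applied to the cyclic pairs $(D_i,D_{i+1})$ closes the Hamilton cycle. Because you targeted a three-part structure, your proposed Hall-type balancing argument is aimed at the wrong object: there is no analogue of the $|D_2|$ versus $|D_4|$ tension in a cyclic triangle blow-up, so the ``delicate verification'' you flag would not even get off the ground against the correct extremal family. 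You would need to rediscover the four-part structure and the $D_2$-$D_4$ balancing mechanism before the extremal case can be completed.
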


Note that the semidegree condition of Theorem \ref{the1} is tight, some relevant counterexamples are indicated in \cite{Keevash(2009)}. Furthermore, Kelly, K\"{u}hn and Osthus \cite{Kelly(2009)} showed that the same condition as Theorem \ref{the1} guarantees that $D$ is \emph{pancyclic}, which contains a cycle of length $l$ for every $l\in \{3, 4,\ldots,n\}$.%Packing results have proved useful in the study of graph properties.  Many results fall into this field and one of the most typical result is   (), which by .

%In this paper,  we discuss several natural ways of strengthening the notion of Hamiltonian.
In this paper, we will discuss several natural problems related to the Hamiltonicity. Note that a \emph{cycle-factor} of a digraph is a set of disjoint cycles that covers all vertices of it. Hence, the Hamiltonian cycle can be regarded as a cycle-factor with only one cycle. We are particularly interested in problems concerning cycle-factors in oriented graphs. In 2009, Keevash and Sudakov \cite{Sudakov(2009)} stated that there exist constants $c, C > 0$ such that for sufficiently large oriented graph $D$ on $n$ vertices with minimum semidegree at least $(1/2-c)n$, if $n_1,\ldots ,n_t$ are numbers satisfying $\sum^t_{i=1} n_i \leq n- C$, then $D$ contains disjoint cycles of lengths $n_1,\ldots,n_t$. In this paper, we prove the following theorem.

\begin{theorem}\label{main1}
Given an integer $t\geq 1$, there exists number $n_0$, so that any oriented graph $D$ on $n\geq n_0$ vertices with $\delta^0(D)\geq (3n-4)/8$ contains $t$ disjoint cycles of lengths $n_1,\ldots , n_t$, where $n = n_1 +\cdots + n_t$ is any positive integer partition of $n$.
\end{theorem}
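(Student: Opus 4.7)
The natural plan is induction on $t$, with Theorem \ref{the1} as the base case $t = 1$. For the inductive step, reorder so that $n_t = \min_i n_i$, and aim to produce a cycle $C$ of length $n_t$ in $D$ such that $D' := D - V(C)$ satisfies the hypothesis of the theorem for the $t-1$ parts $n_1, \ldots, n_{t-1}$, allowing us to invoke the inductive hypothesis on $D'$ and close the induction.

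The central technical obstacle is preserving the semidegree bound under deletion. Since
$\delta^0(D') \geq \delta^0(D) - \max_{v \notin V(C)} \max\bigl(|N^+(v) \cap V(C)|,\, |N^-(v) \cap V(C)|\bigr)$,
in order for $\delta^0(D') \geq (3(n - n_t) - 4)/8$ to hold we need a cycle $C$ with the \emph{low-interaction} property: every external vertex $v$ must have at most $3n_t/8$ in-neighbors and at most $3n_t/8$ out-neighbors in $V(C)$. This is genuinely restrictive, because the average external degree to $V(C)$ is in general closer to $5n_t/8$: using the oriented-graph inequality $d^-(u) \leq n - 1 - d^+(u) \leq (5n-4)/8$ summed over $u \in V(C)$, one obtains an average external in-degree to $V(C)$ of roughly $5n_t/8$, so averaging alone cannot force the desired $3n_t/8$ bound.

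To locate such a $C$, the plan is to combine the pancyclicity result of Kelly, K\"uhn and Osthus with a case split based on how close $D$ is to an extremal configuration for Theorem \ref{the1}. In the \emph{non-extremal} case, where $\delta^0(D) \geq (3n-4)/8 + \Omega(n)$ or the graph is $\Omega(n)$-far from any extremal example, a random / averaging argument over the many length-$n_t$ cycles furnished by pancyclicity should produce a low-interaction $C$, since the slack absorbs the gap between the average $5n_t/8$ and the target $3n_t/8$. In the \emph{extremal} case, one would use the explicit description of extremal oriented graphs from \cite{Keevash(2009)} to construct $C$ directly by exploiting their block structure. When $n_t$ is bounded by a constant, the search for $C$ reduces to a finite-case verification, handled separately.

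The hard part will be the stability analysis: quantifying exactly how close to extremal $D$ must be before the averaging step breaks, and then handling those near-extremal configurations by hand. Because the bound $(3n-4)/8$ is tight, each step of the induction consumes all available slack, so the argument must be tight at every stage; any looseness accumulates across the $t$ inductive steps. We expect the majority of the proof to be devoted to this extremal/stability analysis, together with the verification that the inductive hypothesis can still be applied when $n_{t-1}$ or other remaining parts fall below the threshold $n_0$ of Theorem \ref{the1}, in which case direct constructions for small cycle collections must take over.
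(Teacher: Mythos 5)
Your proposal runs into a genuine gap at the very point you identify as the ``central technical obstacle,'' and the averaging idea you offer to fill it does not. The low-interaction requirement is a worst-case condition: \emph{every} external vertex $v$ must have $|N^\sigma(v)\cap V(C)| \le 3n_t/8$ for $\sigma\in\{+,-\}$. Averaging over the cycles furnished by pancyclicity (or over a random choice of cycle) controls only the \emph{expected} interaction of a fixed $v$ with $C$; it cannot prevent some $v$ from having, say, all $n_t$ cycle vertices among its out-neighbors, which immediately kills the induction when $d^+(v)$ is at the tight threshold. Your back-of-envelope $5n_t/8$ figure actually comes from the upper bound $d^-(u)\le (5n-4)/8$ and so overestimates the typical interaction, but that is beside the point: the issue is not the average at all but the maximum over external vertices, and a concentration/union-bound argument on a single short cycle gives nothing like the needed uniform bound. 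At the threshold $(3n-4)/8$ there is literally no slack, so every one of your $t$ inductive steps would require this worst-case bound to hold \emph{with equality budget}, and the proposal offers no mechanism to achieve that. The extremal/non-extremal split you invoke helps in the extremal case (where one can construct the cycle by hand against the block structure), but in the non-extremal case you still need a pointwise bound, and ``far from extremal'' by $\Omega(n)$ in some unspecified metric does not translate into a $3n_t/8$ bound for every vertex.

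The paper avoids this problem entirely by \emph{not} requiring the pieces to inherit the $(3n-4)/8$ threshold. After peeling off the short cycles (those of length $<\varepsilon^2 n$) via the asymptotic pancyclicity result, the paper applies the Diregularity Lemma and dichotomizes on whether the reduced oriented graph $R$ is a robust $(\mu,1/3)$-outexpander. If not, $D$ is in the explicit extremal family $\mathcal{F}$ and Lemmas~\ref{lemma7}--\ref{lemma6} construct the cycle-factor directly. If so, the long cycles are produced \emph{simultaneously}, not one at a time: $V(D^*)$ is randomly partitioned into sets $S_1,\dots,S_j$ of sizes $n_1,\dots,n_j$ in a cluster-respecting way (Claim~\ref{claim3}). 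The crucial conceptual point is that each piece $S_i$ only needs $\delta^0(D[S_i]) \ge 2\eta|S_i|$ for some tiny constant $\eta$ --- far below $3/8$ --- because the Hamiltonicity of $D[S_i]$ follows from Lemma~\ref{lemma3}(i), which needs only robust expansion of the reduced graph (inherited from $R$ since each $R_i\cong R$) plus this weak degree bound. Thus the semidegree-preservation obstacle that your induction cannot get past simply never arises: the full threshold is used once, at the dichotomy step, to certify either robust expansion or extremality, and neither branch requires low-interaction cycles. Your proposal also leaves the case $n_i < n_0$ as a loose end (``direct constructions... must take over''); the paper handles all short cycles uniformly through pancyclicity before the regularity machinery enters.
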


Theorem \ref{main1} states that there exists any cycle-factor with a limited cycle number instead of "almost" cycle-factor in oriented graphs with the semidegree of Theorem \ref{the1}. Note that the semidegree of Theorem \ref{main1} is sharp for cycle-factor problem in oriented graphs, since it is tight for the existence of Hamiltonian cycle in oriented graphs.

%Thus, it is natural to ask what condition does a digraph satisfy to guarantee the existence of a Hamiltonian cycle.

%Furthermore,  Recently, Wang, Yan and Zhang \cite{Wang(2022)} established some results on the cycle-factor, which refined the results by Keevash and Sudakov \cite{Sudakov(2009)}.

Also, Wang, Wang and Yan \cite{Wang(2023)} made a new progress for digraphs, they showed the following result. For every digraph $D$ with $W \subseteq V(D)$, if $|W|\geq 2k$ and $d_D(x) + d_D(y) \geq 3n - 3$ for all $\{ x, y\} \subseteq W$, then for any integer partition $|W| = \sum^k_{i=1} n_i$ with $n_i \geq 2$ for each $i$, there are $k$ disjoint cycles containing exactly $n_1,\ldots , n_k$ vertices of $W$, respectively. For the cycle-factor problems in graphs, El-Zahar \cite{El-Zahar(1984)} conjectured that one may get any cycle-factor with lengths $n_1,n_2,\ldots,n_t$ in graphs of order $n$ with the minimum degree at least $ \sum^t_{i=1}\lceil n_i/2\rceil$.
This fascinating conjecture has been verified for $t = 2$ in \cite{El-Zahar(1984)}. In particular, it has also been confirmed for sufficiently large graphs in \cite{Abbasi(1999)}. And more relevant results can be found in references \cite{Hajnal(1963), Wang(2010), Wang(2012)}.
%For this fascinating conjecture, it is verified for . In particular, this conjecture holds for sufficiently large graphs, which was shown in.

%El-Zahar \cite{El-Zahar(1984)} verified his conjecture only for $t = 2$. After more than ten years, Abbasi, in his Ph.D Thesis \cite{Abbasi(1999)} justified the conjecture for sufficiently large graphs. Specially, Corr\'{a}di-Hajnal's theorem  is a special case of $n_i = 3$ for all $i\in [t]$. In 2010, Wang \cite{} showed the case $n_i = 4$, for any $i\in [t]$. Also in 2012, Wang \cite{} also gave a minimum degree condition for the case $n_i = 5$, for any $i\in [t]$.

Another natural generalization of Hamiltonianity is to require the existence of a Hamiltonian path between any pair of vertices. We call $D$ is \emph{strongly Hamiltonian-connected} if for any two vertices $x$ and $y$, there is a Hamiltonian path from $x$ to $y$. In addition, Berge \cite{Berge(1985)} demonstrated that every digraph $D$ on $n$ vertices with $\delta^0(D)\geq (n+1)/2$ is strongly Hamiltonian-connected. Theorem \ref{cor2} yields a result about strongly Hamiltonian-connected for sufficiently large oriented graphs.
\begin{theorem}\label{cor2}
There is an integer $n_0 $ such that every oriented graph $D$ on $n\geq n_0$ vertices with $\delta^0(D)\geq 3n/8+2$ is strongly Hamiltonian-connected.
\end{theorem}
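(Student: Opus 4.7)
The plan is to reduce strongly Hamiltonian-connectedness to the Hamilton cycle theorem (Theorem~\ref{the1}) via an auxiliary oriented graph $D^{*}$ on $n+1$ vertices. Given two vertices $x, y \in V(D)$, I would introduce a new vertex $z$ and set $V(D^{*}) = V(D) \cup \{z\}$, keep all arcs of $D$ (deleting $x \to y$ if present, to avoid creating a $2$-cycle when other arcs are added), and add the arcs $y \to z$ and $z \to x$. To extend $z$'s adjacencies to $V(D) \setminus \{x, y\}$, I would orient arcs between $z$ and $v$ in a controlled way: for $v \in N_D^-(x) \setminus N_D^+(y)$ set $v \to z$; for $v \in N_D^+(y) \setminus N_D^-(x)$ set $z \to v$; for $v \in N_D^-(x) \cap N_D^+(y)$ choose the direction greedily to balance $d^+(z)$ and $d^-(z)$; and for $v$ lying in neither $N_D^-(x)$ nor $N_D^+(y)$, orient arbitrarily, again to balance the two semidegrees of $z$.

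Under the hypothesis $\delta^0(D) \geq 3n/8 + 2$, a short calculation (using precisely the $+2$ slack) shows that this assignment can be made so that $\delta^0(D^{*}) \geq (3(n+1)-4)/8$. Therefore Theorem~\ref{the1} yields a Hamiltonian cycle $C^{*}$ of $D^{*}$ once $n$ is sufficiently large. Writing the arcs of $C^{*}$ incident to $z$ as $u \to z \to v$, the construction forces $u \in \{y\} \cup N_D^-(x)$ and $v \in \{x\} \cup N_D^+(y)$. In the favorable case $(u,v) = (y, x)$, deleting $z$ from $C^{*}$ already yields the desired Hamiltonian path from $x$ to $y$ in $D$. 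Otherwise, at least one of the arcs $u \to x$ or $y \to v$ lies in $D$, so $C^{*} - z$ together with such an arc produces a Hamiltonian cycle $C_D$ of $D$ through both $x$ and $y$.

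The next step is to apply a P\'osa-type rotation argument to $C_D$, exploiting the extra $+2$ slack in $\delta^0(D)$ to guarantee enough alternative arcs at $x$ and $y$, and thereby reposition the cycle so that $y$ immediately precedes $x$; deleting that arc then produces the required Hamiltonian path from $x$ to $y$. I expect the rotation step to be the main obstacle: the baseline threshold $(3n-4)/8$ of Theorem~\ref{the1} is tight and by itself leaves no room for rerouting Hamiltonian cycles, so the $+2$ slack has to be used decisively to unlock the rotations at the endpoints $x$ and $y$. Once that step is in place, the verifications that $D^{*}$ is indeed oriented, that $\delta^0(D^{*}) \geq (3(n+1)-4)/8$, and that the correspondence between Hamiltonian cycles of $D^{*}$ and Hamiltonian paths of $D$ from $x$ to $y$ goes through in every case are routine.
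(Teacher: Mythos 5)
Your construction of $D^*$ and the way you intend to read off a Hamiltonian path of $D$ from a Hamiltonian cycle of $D^*$ do not fit together, and the gap is precisely the one the paper's Lemma~\ref{main2} is there to fill.

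First, the claim that a Hamiltonian cycle $C^*$ of $D^*$ must have its arcs at $z$ of the form $u\to z\to v$ with $u\in\{y\}\cup N_D^-(x)$ and $v\in\{x\}\cup N_D^+(y)$ is false as you have set things up. You orient an arc between $z$ and \emph{every} vertex of $V(D)\setminus\{x,y\}$, including vertices lying in neither $N_D^-(x)$ nor $N_D^+(y)$, so $z$ has in- and out-neighbours that are not in-neighbours of $x$ nor out-neighbours of $y$ in $D$. If $C^*$ uses one of these arcs, neither of the replacements $u\to x$ nor $y\to v$ exists in $D$, and nothing is recovered. (Even in the ``nice'' cases, if $u\neq y$ and $v\neq x$ then $C^*-z$ is a path from $v$ to $u$ with $x,y$ in its interior, and appending $u\to x$ does not close it to a cycle.) The natural fix is to restrict $N^-(z)\subseteq N_D^-(x)\cup\{y\}$ and $N^+(z)\subseteq N_D^+(y)\cup\{x\}$, so that a Hamiltonian cycle of $D^*$ always unwraps correctly; but then, since each vertex of $N_D^-(x)\cap N_D^+(y)$ can go only to one side, $\min\{d^+(z),d^-(z)\}$ can be as small as about $3n/16$ (take $N_D^-(x)=N_D^+(y)$ of size $3n/8+2$). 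That is far below the $\approx 3n/8$ threshold of Theorem~\ref{the1}, so you cannot invoke it. This is exactly why the paper proves Lemma~\ref{main2}, a strengthening of Theorem~\ref{the1} that tolerates one vertex of much smaller semidegree; that lemma requires redoing the extremal analysis of \cite{Keevash(2009)} and is the heart of the argument, not a routine verification.

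Second, the fallback step — a P\'osa-type rotation to move $y$ immediately in front of $x$ on a Hamiltonian cycle — is asserted, not proved. Rotation arguments in oriented graphs do not transfer from the undirected setting, and a constant $+2$ of slack above a tight threshold is nowhere near enough to support them. In the paper's proof the $+2$ is not spent on rerouting at all: one \emph{merges} $x$ and $y$ into a single new vertex $w$ on $n-1$ vertices, with $N^+(w)\subseteq N_D^+(x)$, $N^-(w)\subseteq N_D^-(y)$ (splitting $N_D^+(x)\cap N_D^-(y)$ in half so the result stays an oriented graph). Then any Hamiltonian cycle of the merged graph unmerges into a Hamiltonian path of $D$ from $x$ to $y$ with no case analysis or rerouting, and the $+2$ is used only to keep $\delta^0\geq 3(n-2)/8$ for the $n-2$ untouched vertices so that Lemma~\ref{main2} applies with $w$ as the low-degree exceptional vertex. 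You should replace the added-vertex construction and the rotation step by this merger together with a proof (or citation) of a Hamiltonicity statement that allows one exceptional vertex of semidegree $\Omega(n)$.
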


In addition, it also makes sense to seek a Hamiltonion cycle visiting several vertices in a specific order. A digraph $D$ is \emph{$k$-ordered Hamiltonian} if for every sequence $s_1,\ldots,s_k$ of distinct vertices of $D$ there is a directed Hamiltonian cycle which encounters $s_1,\ldots,s_k$ in this order. Also, we call a digraph $D$ is \emph{$k$-linked} if it contains a system of disjoint paths $P_1, P_2,\ldots,P_k$ such that $P_i$ is a path from $x_i$ to $y_i$, for every choice of distinct vertices $x_1, \ldots ,x_k, y_1, \ldots,y_k$. If the union of these $k$ disjoint paths span $D$, we call $D$ is \emph{spanning $k$-linked}. Indeed, every $k$-linked digraph is $k$-ordered and every $2k$-ordered digraph is $k$-linked. In 2008, K\"{u}hn, Osthus, and Young \cite{Kuhn(2008)} proved that every digraph $D$ on $n\geq n_0(k)$ vertices with $\delta^0(D) \geq \lceil(n + k)/2\rceil - 1$ is $k$-ordered Hamiltonian. Here, we give a result about $k$-ordered Hamiltonian and spanning $k$-linked in oriented graphs.
%is an immediate consequence of Theorem \ref{cor2}

\begin{theorem}\label{cor3}
For any positive integer $k$, there is an integer $n_0$ such that for every oriented graph $D$ on $n\geq n_0$ vertices, the following holds.\\
\indent $(i)$ If $\delta^0(D)\geq 3n/8+5k/2-2$, then $D$ is $k$-ordered Hamiltonian.\\
\indent $(ii)$ If $\delta^0(D)\geq 3n/8+7k/2-1$, then $D$ is spanning $k$-linked.
\end{theorem}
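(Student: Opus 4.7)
My strategy is to reduce both parts of Theorem~\ref{cor3} to the cycle-factor result Theorem~\ref{main1}. For part~(i), given the ordered sequence $s_1,\ldots,s_k$, the goal is to produce $k$ internally vertex-disjoint directed paths $P_i$, each from $s_i$ to $s_{i+1}$ (indices modulo $k$), whose union spans $V(D)$; their concatenation is the desired $k$-ordered Hamiltonian cycle.

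The first step is a greedy construction of short connecting paths $Q_i$ from $s_i$ to $s_{i+1}$, each of bounded length $\ell=\ell(k)$, that are pairwise internally vertex-disjoint and avoid all other $s_j$. The excess $5k/2-2$ in the minimum semidegree above $(3n-4)/8$ provides enough slack that, even after $O(k)$ previously-chosen vertices have been committed to earlier $Q_j$'s, one can still extend the current partial path so as to reach $s_{i+1}$ within $\ell$ steps; this is done by counting common out-neighbours of an out-end and in-neighbours of $s_{i+1}$ at bounded distance, which remains large under the semidegree assumption. Let $I$ denote the union of the internal vertices of all $Q_i$'s, so $|I|=O(k)$, and set $D':=D-I$. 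A short calculation shows $\delta^0(D')\geq (3|V(D')|-4)/8$, so $D'$ still satisfies the hypothesis of Theorem~\ref{main1}.

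The second and main step is to extract from $D'$ a cycle-factor consisting of $k$ cycles in which the $i$-th cycle contains both $s_i$ and $s_{i+1}$ in the correct cyclic order, so that opening the cycle at $s_i$ yields a path from $s_i$ to $s_{i+1}$. To this end I would augment $D'$ by a carefully designed set of arcs (and, if necessary, auxiliary vertices chosen so as not to decrease the minimum semidegree below the threshold of Theorem~\ref{main1}) that encodes the pairing constraint, and then invoke Theorem~\ref{main1} on the augmented graph with a suitable positive integer partition of its vertex count into $k$ parts. Concatenating the resulting paths $P_i$ with the skeleton paths $Q_i$ gives the Hamiltonian cycle visiting $s_1,\ldots,s_k$ in the prescribed order. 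Part~(ii) follows the same template with the pairs $(x_i,y_i)$ playing the role of $(s_i,s_{i+1})$: build short paths $Q_i$ from $y_i$ to $x_{i+1}$ (indices modulo $k$) to externally close the structure, then extract $k$ disjoint paths $P_i$ from $x_i$ to $y_i$ covering $V(D')$. The additional slack $7k/2-1$ over $(3n-4)/8$ absorbs the $2k$ prescribed endpoints (rather than the $k$ consecutive pairs appearing in (i)).

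The main obstacle is the second step: Theorem~\ref{main1} controls the lengths of the cycles in the cycle-factor but not which vertices lie on which cycle. The augmentation---or, failing that, a subsequent local rerouting---must be engineered so that each $(s_i,s_{i+1})$ pair ends up on a common cycle in the right cyclic direction. I expect this will require either a mild strengthening of Theorem~\ref{main1} permitting one or two pre-specified vertices per cycle (which should follow from the same underlying proof), or a direct surgery argument applied to a generic cycle-factor, swapping short segments between cycles to realign specified endpoints; the extra semidegree slack (linear in $k$) is exactly what makes either variant feasible.
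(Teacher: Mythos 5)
Your reduction to Theorem~\ref{main1} does not close, and you have correctly identified the reason yourself: Theorem~\ref{main1} gives no control over which vertices land on which cycle of the cycle-factor. The ``mild strengthening'' you propose --- a cycle-factor in which the $i$-th cycle contains both $s_i$ and $s_{i+1}$ in the prescribed cyclic direction --- is not mild. It is equivalent to asking for $k$ disjoint paths $P_i$ from $s_i$ to $s_{i+1}$ spanning $D'$, which is exactly the assertion that $D'$ is spanning $k$-linked on those pairs, i.e.\ part (ii) of the theorem you are trying to prove. In part (ii) the circularity is even more explicit: after the reduction, ``extract $k$ disjoint paths $P_i$ from $x_i$ to $y_i$ covering $V(D')$'' is verbatim the statement that $D'$ is spanning $k$-linked, so you have reduced the claim to itself on a graph of nearly the same order with no base case. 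The ``augmentation by extra arcs or auxiliary vertices'' step is not spelled out and it is not clear any such gadget would preserve both the minimum-semidegree threshold and the property of being an oriented graph (augmenting a digraph freely tends to create 2-cycles), so the key step is missing.

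The tool that actually closes the argument is Theorem~\ref{cor2} (strong Hamiltonian-connectedness), which you do not invoke. The paper's proof of part (i) builds only $k-1$ short connecting paths, from $s_i$ to $s_{i+1}$ for $i\in[k-1]$, each of length at most $5$ (using Lichiardopol's diameter bound for oriented graphs with $\delta^0\geq n/3$); concatenated these give a single short path $P$ from $s_1$ to $s_k$ of length at most $4k-3$ that visits the $s_i$ in order. Removing the internal vertices of $P$ costs only $O(k)$ in the semidegree, leaving a graph that still satisfies the hypothesis of Theorem~\ref{cor2}, which then supplies a Hamiltonian path from $s_k$ back to $s_1$; gluing closes the Hamiltonian cycle. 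Part (ii) is the same: build short paths from $x_i$ to $y_i$ for $i\in[k-1]$, then use Theorem~\ref{cor2} to produce a Hamiltonian path from $x_k$ to $y_k$ in what remains. In short, the single ``big'' connection is handled not by a cycle-factor but by strong Hamiltonian-connectedness, which entirely avoids the vertex-placement problem you ran into.
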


The rest of the paper is organized as follows. The aim of Section 2 is to prepare some notation and basic tools used in the paper. In Section 3, we show some lemmas which are useful in the proof of Theorem \ref{main1}. And then we give the proof of Theorem \ref{main1}. The proof of Theorem \ref{cor2} and Theorem \ref{cor3} are shown in Section 4 and Section 5, respectively. Section 6 mentions some related problems about the generalizations of Hamiltonianity in oriented graphs.

\section{Preliminaries and tools}

\subsection{Notation and definitions}
Let $D=(V,A)$ be a digraph with vertex set $V$ and arc set $A$. We write $|D|$ for the number of vertices in $D$ and denote $a(D)$ to be the number of arcs in $D$. Given two vertices $x, y \in V$, we write $xy$ for the arc directed from $x$ to $y$. For a vertex $x$ of $D$, we write $N^+_D (x) = \{y: xy\in A\}$ (resp. $N^-_D (x)=\{y: yx\in A\} $) to be the \emph{out-neighbourhood} (resp. \emph{in-neighbourhood}) and $d^+_D (x) = |N^+_D (x)|$ (resp. $d^-_D (x)=|N^-_D (x)|$) to be the \emph{out-degree} (resp. \emph{in-degree}) of $x$. %Similarly, let $$N^-_D (x) = \{y: yx\in A(D)\} \text{ and } d^-_D (x) = |N^-_D (x)|$$ be the \emph{in-neighborhood} and the \emph{in-degree} of $x$, respectively.
Further, let $\delta^+(D)$ and $\delta^-(D)$ denote \emph{the minimum out-degree} and \emph{the minimum in-degree} of $D$. Let \emph{minimum degree $ \delta(D)=\min \{d^+(v)+d^-(v): v\in V(D)\}$}. Given a set $X\subseteq V$, the subdigraph of $D$ induced by $X$ is denoted by $D[X]$ and let $D - X$ denote the digraph obtained from $D$ by deleting $X$ and all arcs incident with $X$. Denote $N^+_D(X)$ to be the union of the out-neighbourhood of every vertex $x\in X$. For two subsets $X,Y$ in $V$, denote $A(X, Y)$ to be the set of arcs from $X$ to $Y$, and define $a(X,Y)=|A(X,Y)|$. A matching in a digraph is a set of disjoint arcs with no common endvertices. A matching $M$ is maximum if $M$ contains the maximum possible number of disjoint arcs. And in this paper, the length of a path is its vertices. We abbreviate the term "vertex disjoint" as "disjoint".

Throughout this paper, the notation $0<\beta\ll\alpha$ is used to make clear that $\beta$ can be selected to be sufficiently small corresponding to $\alpha$ so that all calculations required in our proof are valid. For a positive integer $t$, simply write $\{1, \ldots , t\}$ as $[t]$. For convenience, write $a\pm b$ to be an unspecified real number in the interval $[a- b,a + b]$. Next, we mention the important definition as follows.

\begin{definition}\emph{(}robust $(\mu, \tau)$-outexpander\emph{)}\label{def1}
Given $0<\mu \leq \tau < 1$, we say a digraph $R$ is a robust $(\mu, \tau)$-outexpander if $|N^+_R(S)|\geq |S| + \mu |R|$ for every $S\subseteq V(R)$ satisfying $\tau |R| \leq |S| \leq (1-\tau)|R|$.
\end{definition}

%To better characterize the properties of the extremal oriented graphs, which is defined in Definition \ref{def4}, we give the following notation. Here and subsequently, s
In fact, our proof use the technique of \cite{Keevash(2009)}, which finds a Hamiltonian cycle in an oriented graph. Therefore, we also continue use the symbol of \cite{Keevash(2009)}. Set $c$ to be some constant, let $\mu$ be a sufficiently small real number, and write $\alpha=(1/100 - c\sqrt{\mu})n/4$. We partition an oriented graph $D$ on $n$ vertices into four parts $D_1, D_2, D_3, D_4$. For simplicity of notation, for a vertex $x$ in $D_i$, if the cardinality of $N^+(x)\cap D_j$ is at least $\alpha$, then briefly write $D_i:(D_j)^{>\alpha}$. Next, we give the more definitions.

%we let '$>\alpha$' stand for an intersection of size at least  and let '$< \alpha$' describes an intersection of size at most $(\alpha + 3000 \sqrt{\mu})n/4$. , $\alpha_2=(1/100 + c \sqrt{\mu})n/4$; Likewise, if the cardinality of $N^-(x)\cap D_j$ is at most $\alpha_2$, then briefly write $D_i:(D_j)_{<\alpha_2}$.

\begin{definition}\emph{(}A vertex is acceptable\emph{)}\label{def3}
\emph{If a vertex satisfies one of the following 16 properties, then it is acceptable.\\
 \indent $\bullet$ $D_1:(D_2)^{>\alpha}(D_4)_{>\alpha}, D_1:(D_1)^{>\alpha}(D_4)_{>\alpha}, D_1:(D_1)_{>\alpha}(D_2)^{>\alpha}, D_1:(D_1)^{>\alpha}_{>\alpha},$\\
\indent $\bullet$ $ D_2:(D_1)_{>\alpha}(D_3)^{>\alpha}, D_2:(D_1)_{>\alpha}(D_4)^{>\alpha}, D_2:(D_3)^{>\alpha}(D_4)_{>\alpha}, D_2:(D_4)^{>\alpha}_{>\alpha},$\\
\indent $\bullet$ $ D_3:(D_2)_{>\alpha}(D_4)^{>\alpha}, D_3:(D_2)_{>\alpha}(D_3)^{>\alpha}, D_3:(D_3)_{>\alpha}(D_4)^{>\alpha}, D_3:(D_3)^{>\alpha}_{>\alpha},$\\
\indent  $\bullet$ $D_4:(D_1)^{>\alpha}(D_3)_{>\alpha}, D_4:(D_1)^{>\alpha}(D_2)_{>\alpha}, D_4:(D_2)^{>\alpha}(D_3)_{>\alpha}, D_4:(D_2)^{>\alpha}_{>\alpha}.$}
\end{definition}

\begin{definition}\emph{(}A vertex $x$ is circular\emph{)}\label{def2}
\emph{ If $x\in D_i$ such that all but at most $c\sqrt{\mu} |D_{i+1}|$ vertices in $D_{i+1}$ are out-neighbourhood of $x$ and all but at most $c\sqrt{\mu} |D_{i-1}|$ vertices in $D_{i-1}$ are in-neighbourhood of $x$, then the vertex $x$ is circular.}
\end{definition}

\begin{definition}\emph{(}A path $P$ is circular\emph{)}\label{def7}
\emph{A path $P$ is a circular path if every vertex of it is circular.}
\end{definition}

Visibly, the definition of circular is stronger than the definition of acceptable. Now we give the characterization of the extremal family $\mathcal{F}$. Here, the partition $(D_1,D_2,D_3,D_4)$ of $D$ is ordered.

\begin{definition}\emph{(}Extremal family $\mathcal{F}$\emph{)}\label{def4} %\mathcal{F}
\emph{A family $\mathcal{F}$ of oriented graphs is extremal (Fig. \ref{tu1}) if every oriented graph $D\in \mathcal{F}$ on $n$ vertices and for a sufficiently small real $\mu$, there exists a partition $(D_1,D_2,D_3,D_4)$ of $D$ satisfying the following properties:}

 \indent $\bullet$ $|D_i|=(1/4\pm 16\mu)n$, for $i\in [4]$.\\
 \indent $\bullet$ $a(D_i, D_{i+1}) > (1-800\mu)n^2/16$, for $i\in [4]$ \emph{(}modulo 4\emph{);} $a(D_i)>(1/2- 250\mu)n^2/16$, for $i\in \{1,3\}$\emph{;} and $a(D_i, D_j) > (1/2- 300\mu)n^2/16$, for $i,j\in \{2,4\}$ satisfying $i\neq j$.\\% and $e(D, B)> (1/2- 300\mu)n^2/16$.\\
 \indent $\bullet$ Every vertex is acceptable and the number of non-circular vertices is at most $100\sqrt{\mu} n$.
\end{definition}

\begin{figure}[H]
\centering    %居中       %子图居中
   \includegraphics[scale=0.5]{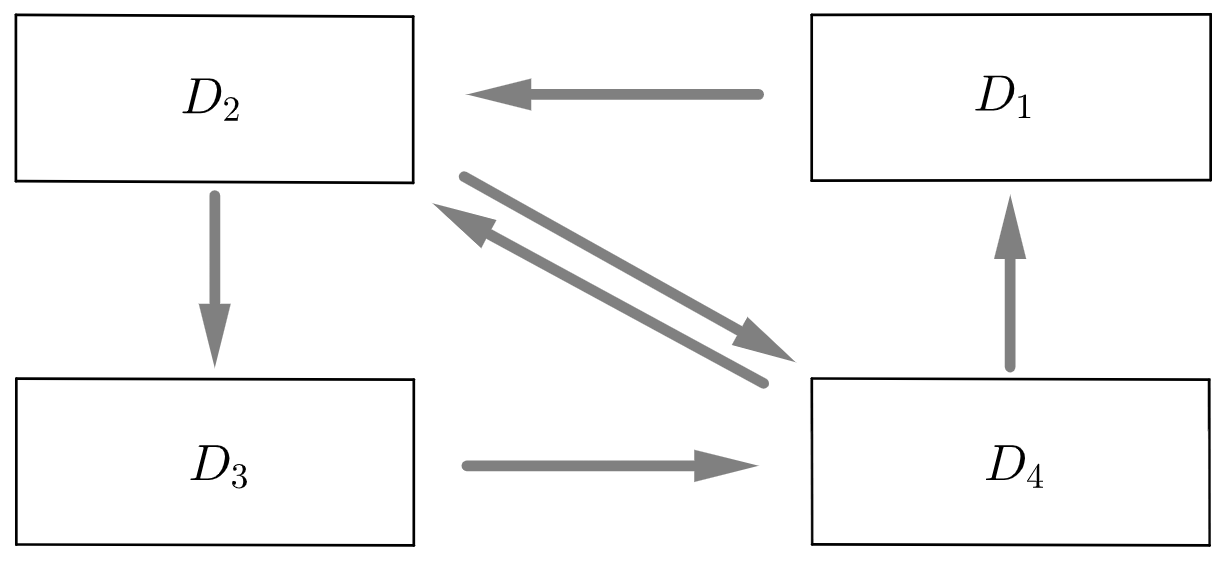}   %以pic.jpg的0.5倍大小输出
\caption{A member $D$ of extremal family $\mathcal{F}$ of order $n$, where the number of arcs from $D_i$ to $D_{i+1}$ are close to $n^2/16$, for $i\in [4]$ (shown in bold); the number of arcs from $D_2$ to $D_4$ and the number of arcs from $D_4$ to $D_2$ are close to $n^2/32$ (shown in bold).}
\label{tu1}
\end{figure}
\subsection{Diregularity lemma and Blow-up lemma}
%The digraphs considered in this paper are always simple (without loops and multiple arcs).

%\subsection{Notations and tools}
In this subsection, we collect the information we need about Diregularity Lemma and Blow-up Lemma. The density of a bipartite graph $G = (A, B)$ with vertex classes $A$ and $B$ is defined to be $d_G(A, B):= \dfrac{e_G(A, B)}{|A||B|}$. We often write $d(A, B)$ if this is unambiguous. Given $\varepsilon >0$, we say that $G$ is \emph{$\varepsilon $-regular} if for all subsets $X\subseteq A$ and $Y\subseteq B$ with $|X|>\varepsilon |A|$ and $|Y|>\varepsilon |B|$ we have that $|d(X, Y )-d(A, B)| < \varepsilon$. Given $d\in [0, 1]$ we say that $G$ is \emph{$(\varepsilon, d)$-super-regular} if it is $\varepsilon$-regular and furthermore $d_G(a)\geq (d-\varepsilon)|B|$ for all $a\in A$ and $d_G(b)\geq (d-\varepsilon)|A|$ for all $b\in B$. %(This is a slight variation of the standard definition of $(\varepsilon, d)$-super-regularity where one requires $d_G(a)\geq d|B|$ and $d_G(b)\geq d|A|$.)

In 1978, Szemer\'{e}di proposed Regular Lemma on graphs, and later Alon and Shapira \cite{Alon(2004)} extended it to the digraph version. %We will use the degree form of Diregularity Lemma which can be easily derived (see e.g. \cite{Young(2007)}) from the standard version.

\begin{lemma}\label{lem1}
\cite{Alon(2004)} \textbf{\emph{(}Diregularity Lemma\emph{)}} For every $\varepsilon \in (0, 1)$ and the number $M'$ there are numbers $M(\varepsilon,M')$ and $n_0$ such that if $D$ is a digraph on $n\geq n_0$ vertices, and $d\in [0, 1]$ is any real number, then there is a partition of the vertices of $D$ into $V_0, V_1,\ldots , V_k$ and a spanning subdigraph $D'$ of $D$ such that the following holds:\\
 \indent $\bullet$ $|V_0|\leq \varepsilon n$, $|V_1| = \cdots = |V_k|$, where $M' \leq k \leq M$,\\
 \indent $\bullet$ for each $\sigma \in \{+,-\}$, $d^{\sigma}_{D'}(x) > d^{\sigma}_D(x)- (d + \varepsilon)n$ for all vertices $x\in D$,\\
 \indent $\bullet$ for all $i = 1, \ldots , k$ the digraph $D'[V_i]$ is empty,\\
 \indent $\bullet$ for all $1\leq i, j\leq k$ with $i \neq j$ the bipartite digraph whose vertex classes are $V_i$ and $V_j$ and whose arcs are all of $A(V_i,V_j)$ arcs in $D'$ is $ \varepsilon$-regular and has density either $0$ or density at least $d$.
\end{lemma}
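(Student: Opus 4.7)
The plan is to reduce the directed statement to the classical (undirected) Szemer\'{e}di Regularity Lemma via an auxiliary bipartite construction, and then clean up the resulting partition by removing bad arcs to obtain the spanning subdigraph $D'$.

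First, encode arc direction by building an auxiliary bipartite graph $H$ on vertex classes $V^+:=V(D)\times\{+\}$ and $V^-:=V(D)\times\{-\}$, with an edge $v^+u^-$ exactly when $vu\in A(D)$; thus arcs of $D$ are in bijection with edges of $H$. Apply the bipartite Szemer\'{e}di Regularity Lemma to $H$ with parameters $\varepsilon_0\ll\varepsilon,d$ and some $M_0\geq M'$ to obtain equitable partitions of $V^+$ and $V^-$, and pull them back to $V(D)$ by taking the common refinement (a vertex $v$ is assigned to the atom determined by which part of the $V^+$-partition contains $v^+$ and which part of the $V^-$-partition contains $v^-$). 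After absorbing leftovers and any undersized atoms into an exceptional set $V_0$ of size at most $\varepsilon n$ and rebalancing the remaining atoms to equal size, this yields $V_1,\ldots,V_k$ with $M'\leq k\leq M$, in which for every ordered pair $(V_i,V_j)$ the bipartite digraph of arcs from $V_i$ to $V_j$ is $\varepsilon$-regular. The key point is that directed $\varepsilon$-regularity of $(V_i,V_j)$ is inherited from undirected $\varepsilon_0$-regularity in $H$, because those arcs correspond precisely to the edges of $H$ between $V_i^+$ and $V_j^-$.

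Next, construct $D'$ by deleting from $D$ every arc that (a) lies inside some $V_i$, or (b) lies in an ordered pair $(V_i,V_j)$ whose directed density is smaller than $d$, or (c) lies in an $\varepsilon$-irregular pair. The structural conclusions follow immediately: $D'[V_i]$ is empty by (a), and each bipartite digraph between two clusters is either empty or has density at least $d$ and remains $\varepsilon$-regular. To establish the degree condition, bound the losses at a fixed vertex $x$: at most $|V_1|\leq n/M'$ from (a); at most $dn$ in each direction from (b), summed across all low-density pairs; and at most $\varepsilon n$ from (c), since the total number of irregular pairs is at most $\varepsilon_0 k^2$. Choosing $M'$ large and $\varepsilon_0$ small in terms of $\varepsilon$ and $d$ makes the total loss at most $(d+\varepsilon)n$ in each of the in- and out-directions.

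The main subtlety, and the step I expect to be most delicate, is forcing the per-vertex bound $d^{\sigma}_{D'}(x)>d^{\sigma}_D(x)-(d+\varepsilon)n$ to hold for \emph{every} vertex rather than merely on average. The standard remedy is to inflate $V_0$ slightly by moving into it any vertex whose local loss exceeds the target; a counting argument based on the $\varepsilon_0 k^2$ bound on irregular pairs shows that only $O(\varepsilon_0 n)$ such bad vertices exist, and the clusters can be re-equalised afterwards. With $\varepsilon_0$ chosen sufficiently small in terms of $\varepsilon$, the enlarged $V_0$ still satisfies $|V_0|\leq\varepsilon n$, and equal cluster sizes are preserved by the usual round-down-and-redistribute device from Szemer\'{e}di's original proof.
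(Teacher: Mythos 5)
The paper cites this lemma to Alon and Shapira and supplies no proof, so there is nothing internal to compare against; but your argument has a genuine gap at the common-refinement step. Applying the bipartite Regularity Lemma to $H$ yields two \emph{independent} equitable partitions, one of $V^+$ and one of $V^-$, each into roughly $m$ classes of size about $n/m$. Intersecting them back in $V(D)$ produces atoms $A_{ab}=\{v:v^+\in W^+_a,\,v^-\in W^-_b\}$, and the directed regularity of a pair $(V_i,V_j)$ with $V_i\subseteq A_{ab}$, $V_j\subseteq A_{cd}$ would have to be inherited from the $\varepsilon_0$-regularity of $(W^+_a,W^-_d)$ in $H$ restricted to $V_i^+\subseteq W^+_a$ and $V_j^-\subseteq W^-_d$. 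Regularity only passes to subsets occupying at least an $\alpha$-fraction of the ambient classes with $\alpha$ bounded well above $\varepsilon_0$. A typical atom has size $\Theta(n/m^2)$, hence is a $\Theta(1/m)$-fraction of its cluster $W^+_a$; and if you discard every atom smaller than $\eta n/m$, the discarded mass is of order $m\eta n$, forcing $\eta=O(\varepsilon/m)$ to keep $|V_0|\leq\varepsilon n$, so the surviving atoms are still only an $O(\varepsilon/m)$-fraction of their clusters. Since $m$ is dictated by $\varepsilon_0$ through the Regularity Lemma and grows without bound as $\varepsilon_0\to 0$, the requirement $\varepsilon_0\ll 1/m$ is circular and cannot be met. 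Hence the $\varepsilon$-regularity of the ordered pairs $(V_i,V_j)$ does not follow from this construction.

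The standard route, and what Alon and Shapira actually do, is to run the index (mean-square-density) increment argument directly on the \emph{directed} densities, or equivalently to apply a simultaneous multicolour Regularity Lemma to the complete graph on $V(D)$ whose edges are coloured by which of $uv$, $vu$, both, or neither lies in $A(D)$. This produces a \emph{single} partition of $V(D)$ that is $\varepsilon$-regular for every ordered cluster pair at once, with no refinement mismatch. As a secondary point, your per-vertex degree bound for the losses at low-density pairs is also too quick: for a fixed $x\in V_i$ the out-arcs lost to clusters $V_j$ with $d^+(V_i,V_j)<d$ are not automatically at most $dn$ (only their \emph{average} over $x\in V_i$ is), so the uniform bound for every vertex needs the additional bookkeeping of the degree-form derivation, not merely the aggregate count you invoke.
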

%In more detail, $V_1,\ldots , V_k$ are called \emph{clusters}, $V_0$ is called \emph{the exceptional set} and the vertices in $V_0$ are called \emph{exceptional vertices}.

Given $V_1,\ldots , V_k$ and a digraph $D'$, the reduced digraph $R'$ with parameters $(\varepsilon, d)$ is the digraph whose vertex set is $[k]$ and in which $ij$ is an arc if and only if the bipartite digraph whose vertex classes are $V_i$ and $V_j$ and whose arcs are all the arcs from $V_i$ to $V_j$ in $D'$ is $\varepsilon$-regular and has density at least $d$. Note that $R'$ is not necessarily an oriented graph even if $D$ is.
%It is easy to check that the reduced digraph $R'$ obtained from the regularity lemma "almost inherits" the minimum semidegree of $D$, that is $\delta^{\sigma}(R')/|R'|> \delta^{\sigma}(D)/|D|- d -2\varepsilon$, for $\sigma \in \{+,-\}$.
The next lemma shows that there is a reduced oriented graph $R \subseteq R'$ which still almost inherits the minimum semidegree and density of $D$.

\begin{lemma}\label{lem2}
\cite{Kelly(2008)} For every $\varepsilon\in (0, 1)$ there exist numbers $M' = M'(\varepsilon)$ and $n_0 = n_0(\varepsilon)$ such that the following holds. Let $d\in [0, 1]$ with $\varepsilon \leq d/2$. Let $D$ be an oriented graph of order $n\geq n_0$ and $R'$ the reduced digraph with parameters $(\varepsilon, d)$ obtained by applying Diregularity Lemma to $D$. Then $R'$ has a spanning oriented subgraph $R$ such that %\\ \indent  $(a)$
$\delta^0(R)\geq (\delta^0(D)/|D|- (3\varepsilon + d))|R|$.
%\indent $(b)$ for all disjoint sets $S, T \subseteq V(R)$  with $a_D(S^*, T^*)\geq 3dn^2$ we have $a_R(S, T) > d|R|^2$, where $S^*:=\sum_{i\in S} V_i$ and $T^*:=\sum_{i\in T} V_i $;\\
%\indent $(c)$ for every set $S \subseteq V(R)$ with $a_D(S^*)\geq 3dn^2$ we have $a_R(S) > d|R|^2$, where $S^*:=
%\sum_{i\in S} V_i$.
\end{lemma}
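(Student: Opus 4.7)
The plan is to obtain $R$ in two steps: first use the Diregularity Lemma to bound $\delta^0(R')$ from below, and then orient the antiparallel pairs of $R'$ to extract an oriented spanning subdigraph while losing at most an additional $\varepsilon|R|$ in the semidegree.

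Step~1 (semidegree of $R'$). Write $m=|V_1|=\dots=|V_k|$, so $mk\geq (1-\varepsilon)n$. Fix a cluster $V_i$. Lemma~\ref{lem1} guarantees that every $x\in V_i$ satisfies $d^+_{D'}(x)\geq \delta^+(D)-(d+\varepsilon)n$, so
\[
m\bigl(\delta^+(D)-(d+\varepsilon)n\bigr)\leq\sum_{x\in V_i}d^+_{D'}(x)\leq m|V_0|+\sum_{j:\,ij\in A(R')}e_{D'}(V_i,V_j)\leq m\varepsilon n+d^+_{R'}(i)\,m^2,
\]
where the middle inequality uses that the pair $(V_i,V_j)$ in $D'$ is either empty or has density at least $d$ (in which case $ij\in R'$), by Lemma~\ref{lem1}. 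Rearranging and applying $m\leq n/k$ gives $d^+_{R'}(i)\geq (\delta^+(D)/n-d-2\varepsilon)k$, and the symmetric argument gives the same bound for $d^-_{R'}(i)$; hence $\delta^0(R')\geq (\delta^0(D)/|D|-d-2\varepsilon)|R|$.

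Step~2 (orientation). Because $D$ is oriented we have $e_D(V_i,V_j)+e_D(V_j,V_i)\leq m^2$, so whenever both $ij$ and $ji$ lie in $R'$ their $D'$-densities sum to at most $1$ while each is in $[d,1-d]$. For every such ``double pair'' $\{i,j\}$, I would delete the arc of smaller $D'$-density (breaking ties by a fixed rule), producing an oriented spanning subdigraph $R\subseteq R'$.

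The main obstacle is bounding the deletion loss per vertex, since a worst-case orientation can strip off $t_i$ out-arcs of $i$, where $t_i$ is the number of double pairs at $i$, and $t_i$ need not be small a priori. I would handle this by sharpening Step~1: each double pair contributes at most $(1-d)m^2$ rather than $m^2$ to $\sum_{x\in V_i}d^+_{D'}(x)$, which improves the lower bound to $d^+_{R'}(i)\geq(\delta^+(D)/n-d-2\varepsilon)k+d\,t_i$. Combined with the symmetric bound on $d^-_{R'}(i)$, the ``larger-density'' tiebreaking rule, and the hypothesis $\varepsilon\leq d/2$, this slack $d\,t_i$ absorbs the deletion loss and leaves at least $(\delta^0(D)/|D|-d-3\varepsilon)|R|$ arcs in each orientation at every vertex of $R$, yielding the claimed semidegree.
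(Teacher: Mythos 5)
The paper does not prove this lemma; it is cited directly from \cite{Kelly(2008)}, so there is no in-paper argument to compare against. Judging your sketch on its own terms: Step~1 is correct, including the refinement $d^+_{R'}(i)\geq(\delta^+(D)/n-d-2\varepsilon)k+d\,t_i$, since each double pair contributes at most $(1-d)m^2$ arcs to $\sum_{x\in V_i}d^+_{D'}(x)$, exactly as you say.

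Step~2 has a genuine gap, and the assertion that ``this slack $d\,t_i$ absorbs the deletion loss'' is not justified. If the density comparison deletes all of $i$'s out-double-arcs, then $d^+_R(i)=d^+_{R'}(i)-t_i$, while your only credit is $d\,t_i<t_i$; the greedy rule gives no upper bound on how many out-arcs are lost at a given vertex, and the symmetric in-degree computation improves $d^-_R(i)$ but says nothing about $d^+_R(i)$. A concrete configuration that defeats the rule: suppose every out-arc of cluster $i$ belongs to a double pair, with $d_{D'}(V_i,V_j)=1/2-\delta$ and $d_{D'}(V_j,V_i)=1/2+\delta$ for all such $j$ and some tiny $\delta>0$. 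Then $\sum_j d_{D'}(V_i,V_j)\approx(k-1)/2$, which comfortably exceeds $(\delta^+(D)/n-d-2\varepsilon)k$ when $\delta^0(D)/|D|$ is around $3/8$, so nothing in your Step~1 inequalities rules this out; yet your rule deletes \emph{every} out-arc of $i$, giving $d^+_R(i)=0$, and the hypothesis $\varepsilon\leq d/2$ plays no role in preventing this. The lemma is true, but the orientation cannot be chosen by comparing the two densities within each double pair in isolation; one needs a global argument for choosing which member of each antiparallel pair to keep (or an argument that $t_i=O(\varepsilon k)$, which does not hold in general), and your proposal supplies neither.
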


To find a subdigraph with the maximum degree to be bounded, the standard idea is to find a special structure in the reduced oriented graph $R$ and restore it to the subdigraph in $D$. So Blow-up Lemma of Koml\'{o}s, S\'{a}rk\"{o}zy and Szemer\'{e}di \cite{Komlos(1997)} is necessary. Notice that, in the general case, we use the Blow-up Lemma on the underling graph of $R$. The following lemma states that dense regular pairs and complete bipartite graphs behave identically for the problem of finding subgraphs with bounded degree in graphs.

%The result Pancyclic
\begin{lemma}\label{lem3}
\cite{Komlos(1997)} \textbf{\emph{(}Blow-up Lemma\emph{)}} Given a graph $F$ on $[k]$ and positive numbers $d, \Delta$, there is a positive real $\eta_0 =\eta_0(d, \Delta, k)$ such that the following holds for all positive numbers $l_1,\ldots , l_k$ and all $0 <\eta \leq \eta_0$. Let $F'$ be the graph obtained from $F$ by replacing each vertex $i\in F$ with a set $V_i$ of $l_i$ new vertices and joining all vertices in $V_i$ to all vertices in $V_j$ whenever $ij$ is an edge of $F$. Let $G'$ be a spanning subgraph of $F'$ such that for every edge $ij\in F$ the graph $(V_i, V_j)_{G'}$ is $(\eta, d)$-super-regular. Then $G'$ contains a copy of every subgraph $H$ of $F'$ with $\Delta(H)\leq \Delta$. %Moreover, this copy of $H$ in $G'$ maps the vertices of $H$ to the same sets $V_i$ as the copy of $H$ in $F'$, that is, if $h\in V(H)$ is mapped to $V_i$ by the copy of $H$ in $F'$, then it is also mapped to $V_i$ by the copy of $H$ in $G'$.
\end{lemma}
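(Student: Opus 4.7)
The plan is to follow the classical Komlós–Sárközy–Szemerédi strategy, which embeds $H$ into $G'$ via a randomized greedy algorithm augmented by a small buffer that is finished off with a König/Hall-type perfect matching. Fix constants $0 < \eta \ll \beta \ll 1/\Delta, 1/k, d$. Let $W_i = V(H) \cap V_i$ denote the class of $H$-vertices that must be embedded into $V_i$.

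\textbf{Phase 1 (buffer selection).} In each class $W_i$ greedily pick a buffer $B_i \subseteq W_i$ of size roughly $\beta l_i$ such that the vertices of $\bigcup_i B_i$ are pairwise at distance at least $3$ in $H$. Such a choice is possible because $\Delta(H) \leq \Delta$ implies that each vertex forbids $O(\Delta^2)$ others. The distance condition ensures that when it comes time to embed the buffer, the constraints imposed by different buffer vertices are essentially independent.

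\textbf{Phase 2 (random greedy embedding of non-buffer vertices).} Order the vertices of $W := V(H) \setminus \bigcup_i B_i$ arbitrarily and embed them one at a time. When embedding $h \in W \cap W_i$, let its \emph{candidate set} $C(h) \subseteq V_i$ consist of vertices that are still unused and that lie in the common neighborhood (in $G'$) of the already-placed images of the $H$-neighbors of $h$. Pick the image of $h$ uniformly at random from $C(h)$. Two invariants must be tracked throughout the process for every unembedded vertex $h'$ and every pair $ij \in E(F)$: (i) $|C(h')|$ is within a $1 \pm o(1)$ factor of its expected size, and (ii) the restriction of $G'$ to the unused part of $V_i \times V_j$ remains super-regular (with slightly worse parameters). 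Both invariants follow from $\varepsilon$-regularity combined with Azuma/Chernoff concentration: at each step only $O(\Delta)$ candidate sets are affected, so the martingale differences are bounded, and $\eta \ll 1$ gives enough slack. Vertices whose candidate set drops too far are flagged as \emph{bad}; an averaging argument shows their number stays $o(l_i)$, and one uses the buffer or a minor re-routing to absorb them.

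\textbf{Phase 3 (finishing via perfect matching).} After Phase 2 the buffer $\bigcup_i B_i$ remains to be embedded into the unused vertices $U_i \subseteq V_i$. Build an auxiliary bipartite graph $\Gamma_i$ with parts $B_i$ and $U_i$, where $h \in B_i$ is joined to $u \in U_i$ iff $u$ lies in the common neighborhood of the images of $h$'s $H$-neighbors. Super-regularity preserved from Phase 2 together with the distance-$3$ condition on the buffer implies that every $h \in B_i$ has $\Omega(l_i)$ partners in $\Gamma_i$ and, symmetrically, every $u \in U_i$ has many partners. From this one verifies Hall's condition on $\Gamma_i$ and extracts a perfect matching, which defines the embedding of $B_i$.

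The main obstacle is Phase 2: one must simultaneously preserve the candidate-set sizes for \emph{every} future vertex (so the greedy step never gets stuck) and preserve super-regularity of the residual bipartite graphs (so Phase 3 goes through). Both require controlling a very large collection of random variables, and the proof hinges on the bounded-degree assumption $\Delta(H) \leq \Delta$, which keeps the dependencies local and Azuma's inequality applicable. The distance-$3$ buffer, the flag-and-reroute treatment of bad vertices, and the careful choice of the parameter hierarchy $\eta \ll \beta \ll 1/\Delta$ are the ingredients that make the three phases mesh.
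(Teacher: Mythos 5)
The paper does not prove this lemma; it imports the Blow-up Lemma as a black box from Koml\'os, S\'ark\"ozy and Szemer\'edi \cite{Komlos(1997)}, so there is no in-paper argument to compare against. Your three-phase outline (distance-$3$ buffer, randomized greedy embedding with candidate-set tracking, K\H{o}nig/Hall matching to finish the buffer) faithfully reproduces the strategy of the original proof, and the ingredients you name -- bounded degree keeping dependencies local so Azuma applies, the parameter hierarchy $\eta \ll \beta \ll 1/\Delta$, preservation of residual super-regularity -- are the right ones. As written it remains a sketch: the treatment of bad vertices (``the buffer or a minor re-routing absorbs them'') glosses over the actual mechanism in \cite{Komlos(1997)}, which promotes a vertex in the embedding order the moment its candidate set shrinks below a threshold and carefully bounds how many promotions can occur; and the claim that Hall's condition holds in Phase 3 needs the precise super-regularity estimate carried through Phase 2 plus the disjointness of buffer neighborhoods, which you invoke but do not verify. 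These are exactly the technically heavy parts of the original paper, so nothing is wrong in your plan, but the load-bearing steps are stated rather than proved.
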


\section{Cycle-factor in oriented graphs}
Before staring the proof of Theorem \ref{main1}, we outline the main idea.

\textbf{Sketch of proof of Theorem \ref{main1}.}  Suppose that $D$ is an oriented graph as stated in Theorem \ref{main1}. Assume that Theorem \ref{main1} is false, namely, $D$ contains no some cycle-factor $\mathcal{C}$. Using the asymptotic pancyclicity (Lemma \ref{lemma1}) of $D$, we can find all disjoint short cycles in $\mathcal{C}$. Applying Diregularity lemma for the remaining digraph, there is an reduced oriented graph $R$. According to Lemmas \ref{lemma7}-\ref{lemma6}, $D$ is not an extremal oriented graph in $\mathcal{F}$ as otherwise we can find the cycle-factor $\mathcal{C}$. Further, this implies that $R$ is a robust $(\mu, 1/3)$-outexpander. Then the remaining cycles in $\mathcal{C}$ can be find by using the splitting operation (see Claim \ref{claim3}), a contradiction.

\subsection{Preliminary lemmas}
Suppose that $D$ is an oriented graph on $n$ vertices with $\delta^0(D)\geq (3n-4)/8$, where $n$ is large enough. Here, we show a sequence of lemmas that are crucial to our proof.

In 1998, Shen \cite{Shen(1998)} showed that any oriented graph $D$ on $n$ vertices with $\delta^+(D)\geq 0.355n$ contains a triangle. And we write \emph{$l$-cycle} to be a cycle with length $l$. In \cite{Kelly(2009)}, authors also showed that every oriented graph $D$ on $n$ vertices with $\delta^0(D)\geq n/3+1$ contains an $l$-cycle for every $l\in \{4,\ldots,n/10^{10}\}$. For the sake of presentation, we call a digraph is asymptotic pancyclic if it has an $l$-cycle for every $l\in \{3,4,\ldots,n/10^{10}\}$. Therefore, the following lemma is straightforward.

\begin{lemma}\label{lemma1}\textbf{\emph{(}Asymptotic Pancyclic\emph{)}}
 If $D$ is an oriented graph on $n$ vertices with $\delta^0(D)\geq 0.355n$, then $D$ is asymptotic pancyclic.
\end{lemma}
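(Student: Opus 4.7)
The plan is to combine the two results cited in the paragraph immediately preceding the lemma: Shen's triangle theorem and the Kelly--Kühn--Osthus cycle-length theorem. Under the hypothesis $\delta^0(D)\ge 0.355 n$, each of these results becomes immediately applicable, and together they cover the full range $\{3,4,\ldots,n/10^{10}\}$, so the lemma reduces to a bookkeeping step.

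First I would dispose of the case $l=3$. Since $\delta^0(D)=\min\{\delta^+(D),\delta^-(D)\}\ge 0.355 n$, in particular $\delta^+(D)\ge 0.355 n$, and Shen's theorem immediately yields a directed triangle in $D$. Next, for every integer $l$ with $4\le l\le n/10^{10}$, I would invoke the Kelly--Kühn--Osthus result, whose hypothesis $\delta^0(D)\ge n/3 + 1$ is implied by $\delta^0(D)\ge 0.355 n$ provided $n$ is large enough. Concretely, $0.355 n \ge n/3 + 1$ holds whenever $n\ge 1/(0.355 - 1/3)$, i.e. roughly $n\ge 46$, which is certainly satisfied in the asymptotic regime. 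Applying their theorem then produces an $l$-cycle for every $l$ in the required range.

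Putting the two cases together, $D$ contains an $l$-cycle for every $l\in\{3,4,\ldots,n/10^{10}\}$, which is exactly the definition of asymptotic pancyclicity given in the paper. There is no real obstacle to overcome: the lemma is essentially an observation that $0.355 n$ is simultaneously strong enough to trigger Shen's triangle bound and comfortably larger than the $n/3 + 1$ threshold required by the Kelly--Kühn--Osthus pancyclicity statement, so both cited theorems apply without modification.
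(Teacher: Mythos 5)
Your proposal is exactly the argument the paper has in mind: it explicitly cites Shen's triangle theorem for the case $l=3$ and the Kelly--K\"uhn--Osthus result for $4\le l\le n/10^{10}$, and then declares the lemma ``straightforward,'' which is precisely your two-case bookkeeping. The only thing worth noting is that for $n$ small enough that $0.355n < n/3+1$ the range $\{3,\ldots,n/10^{10}\}$ is empty anyway, so the statement is vacuous there and your ``asymptotic regime'' caveat causes no loss.
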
%

%The following fact can easily obtained by the definition of robust $(\mu, \tau)$-outexpander.

\begin{fact}\label{fac1}
If $R$ with $\delta^0(R)\geq (3/8-3d)|R|$ is a robust $(\mu,1/3)$-outexpander, then $R$ is also a robust $(\mu, \tau)$-outexpander, where $d\ll \mu\leq \tau \leq 1/100$.
\end{fact}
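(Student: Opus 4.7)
Per Definition~\ref{def1}, the requirement for $R$ to be a robust $(\mu,\tau)$-outexpander is $|N^+_R(S)|\geq |S|+\mu|R|$ for every $S\subseteq V(R)$ with $\tau|R|\leq |S|\leq (1-\tau)|R|$, written in terms of the ordinary outneighbourhood $N^+_R$ exactly as stated in the paper. I will prove the fact by case analysis on $|S|$: since $\tau\leq 1/100<1/3$, the new range of $|S|$ strictly contains the interval $[|R|/3,2|R|/3]$ handled by the $(\mu,1/3)$-hypothesis, so I need only verify two extra ``tail'' ranges, namely the small range $\tau|R|\leq |S|\leq |R|/3$ and the large range $2|R|/3\leq |S|\leq (1-\tau)|R|$.

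For the small range, I double count arcs leaving $S$. Any vertex of $R$ receives at most $|S|$ arcs from $S$, so $(3/8-3d)|R|\cdot |S|\leq \sum_{u\in S} d^+_R(u)\leq |S|\cdot |N^+_R(S)|$, and hence $|N^+_R(S)|\geq (3/8-3d)|R|$ regardless of the actual value of $|S|$. Since $|S|+\mu|R|\leq (1/3+\mu)|R|$, the target inequality reduces to $3/8-3d\geq 1/3+\mu$, i.e., $1/24\geq 3d+\mu$, which holds comfortably given $d\ll \mu\leq 1/100$.

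For the large range, I use the minimum in-degree condition. If some $v\notin N^+_R(S)$, then $N^-_R(v)\cap S=\emptyset$, forcing $\delta^-(R)\leq |N^-_R(v)|\leq |R|-|S|\leq |R|/3$. This contradicts $\delta^0(R)\geq (3/8-3d)|R|>|R|/3$ (again using $3d<1/24$). Therefore $N^+_R(S)=V(R)$, so $|N^+_R(S)|=|R|\geq |S|+\mu|R|$ follows from $|S|\leq (1-\tau)|R|\leq (1-\mu)|R|$ via $\mu\leq \tau$.

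There is no serious obstacle: both tail cases reduce to elementary numerical inequalities, all of which are guaranteed by the hierarchy $d\ll \mu\leq \tau\leq 1/100$ together with the constant slack $3/8-1/3=1/24$. Combining the two tail cases with the hypothesis for the middle range yields $|N^+_R(S)|\geq |S|+\mu|R|$ throughout $\tau|R|\leq |S|\leq (1-\tau)|R|$, which is exactly the robust $(\mu,\tau)$-outexpander property.
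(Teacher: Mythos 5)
Your proof is correct and follows essentially the same three-case decomposition as the paper's proof: the middle range $[|R|/3,2|R|/3]$ is handled by hypothesis, the small range via the minimum out-degree (your double-counting is a slight elaboration of the paper's one-line observation that $|N^+_R(S)|\geq\delta^+(R)$), and the large range via the minimum in-degree forcing $N^+_R(S)=V(R)$. The numerical verifications match the paper's.
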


\begin{proof}
%In what follows, $k$ denotes the order of $R$.
Let $|R|=k$. By the definition of robust $(\mu,1/3)$-outexpander, we have that $|N^+_R(S)|\geq |S| + \mu k$ for every $S\subseteq V(R)$ satisfying $ k/3 \leq |S| \leq 2k/3$. For a subset $S$ of $V(R)$ satisfying $\tau k\leq |S| \leq k/3$, it is easy to check that $|N^+_R(S)|\geq (3/8-3d)k\geq k/3+k/100\geq |S| + \mu k.$ And if $ 2k/3 \leq |S| \leq (1-\tau)k$, then $|S| + |N^-_R(v)| > k$, that is, $S\cap N^-_R(v)\neq \emptyset$, for any $v\in R$. And then $N^+_R(S) = V(R)$. So $$|N^+_R(S)|=|R|=(1-\tau)k+\tau k \geq |S| + \tau k\geq |S| + \mu k.$$ This implies that $R$ is a robust $(\mu, \tau)$-outexpander, as desired.
\end{proof}

%It is interest that Lemma \ref{lemma2} allows some oriented graphs without the semidegree greater than $(3n-4)/8$ are Hamiltonian., by the semidegree condition of $R$.
%\begin{lemma}\label{lemma2}
%\cite{Keevash(2009)} Let $M'$, $n_0$ be positive numbers and let $\varepsilon, d, \eta, \mu, \tau$ be positive reals such
%that $1/n_0 \ll 1/M' \ll \varepsilon \ll d \ll \mu \ll \tau \ll \eta < 1$. Let $D$ be an oriented graph on $n\geq n_0$ vertices with $\delta^0(D)\geq 2\eta n$. Let $R'$ be the reduced digraph of $D$ with parameters $(\varepsilon, d)$ and $|R'| \geq M'$. Suppose that there exists a spanning oriented subgraph $R$ of $R'$ with $\delta^0(R)\geq \eta|R|$ and such that it is a robust $(\mu, \tau)$-outexpander. Then $D$ is Hamiltonian.
%\end{lemma}
% with partition $V_0,V_1,V_2,\ldots,V_k$

\begin{lemma}\label{lemma3}
\cite{Keevash(2009)} Given $n^{-1/6}_0 \ll 1/M' \ll \varepsilon \ll d \ll \mu \ll \tau \ll \eta \leq 1$. Let $D$ be an oriented graph on $n\geq n_0$ vertices, and $L$ is a subset of $V(D)$ with cardinality at most $t\varepsilon^2 n$. Let $R'$ be the reduced digraph of $D-L$ with parameters $(\varepsilon, d)$. Suppose that $R$ is a spanning oriented subgraph of $R'$ as stated in Lemma \ref{lem2}.\\
\indent $(i)$ If $R$ is a robust $(\mu, \tau)$-outexpander and $\delta^0(D)\geq 2\eta n$, then $D$ is Hamiltonian.\\
\indent $(ii)$ If $R$ is not a robust $(\mu, 1/3)$-outexpander and $\delta^0(D)\geq (3n-4)/8$, then $D$ is a member of family $\mathcal{F}$.
\end{lemma}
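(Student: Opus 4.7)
The plan is to handle both parts by exploiting the structure of the reduced oriented graph $R$ obtained via the Diregularity Lemma, but with quite different techniques for each part. For part $(i)$ I would construct a Hamilton cycle in $D$ by lifting a suitable spanning structure of $R$ through the Blow-up Lemma, whereas for part $(ii)$ I would use the failure of robust outexpansion together with the semidegree bound $(3n-4)/8$ to force the four-part extremal structure of $\mathcal{F}$.

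For part $(i)$, note first that by Lemma \ref{lem2} one has $\delta^0(R) \geq (2\eta - 3\varepsilon - d)|R|$, which with the hierarchy $\eta \gg d, \varepsilon$ is comfortably bounded away from zero. Combined with the assumption that $R$ is a robust $(\mu,\tau)$-outexpander, a standard robust-outexpander result produces a Hamilton cycle $C_R$ in $R$ itself. The next step is to move from $R$ to $D$: the exceptional set is $L \cup V_0$, of size at most $t\varepsilon^2 n + \varepsilon n = O(\varepsilon n)$, and I would absorb each exceptional vertex, one at a time, onto a short path through appropriately chosen clusters, using the fact that each such vertex has $\Omega(n)$ in- and out-neighbours in the clusters. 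After rebalancing and deleting a small proportion of vertices from each cluster to restore equal sizes, the bipartite pairs corresponding to edges of $C_R$ become $(\eta',d')$-super-regular, and an application of the Blow-up Lemma (Lemma \ref{lem3}) then yields a Hamilton cycle of $D$.

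For part $(ii)$, the failure of robust $(\mu,1/3)$-outexpansion furnishes a set $S \subseteq V(R)$ with $|R|/3 \leq |S| \leq 2|R|/3$ and $|N_R^+(S)| < |S| + \mu|R|$. Setting $T = V(R)\setminus N_R^+(S)$, almost no arcs go from $S$ to $T$. The semidegree bound $\delta^0(R) \geq (3/8 - 3\varepsilon - d)|R|$, applied both to vertices of $T$ (whose out-neighbours must largely avoid $T$ and hence lie in $S$) and dually to vertices of $S$, then forces $|S|$ and $|T|$ to sit close to $|R|/2$ and concentrates the bulk of the arcs within $S \cup T$ into a bipartite $S \to T$ pattern. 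Iterating the same outexpansion-failure argument on $S$ and $T$ separately (each still has enough internal semidegree, once arcs into the other side are stripped, to be forced to fail outexpansion again) produces a further splitting into four parts. A careful accounting of arc densities on these four clusters pins down the cyclic orientation $R_1 \to R_2 \to R_3 \to R_4 \to R_1$ and the dense bidirectional traffic between $R_2$ and $R_4$. Lifting this partition back to $D$ by taking $D_i$ to be the union of the clusters indexed by $R_i$, with $L \cup V_0$ redistributed to preserve balance, yields the cluster-size and arc-count bounds of Definition \ref{def4}; a vertex-by-vertex verification using the sharp semidegree bound $(3n-4)/8$ then gives the acceptability of every vertex and the required bound on the number of non-circular vertices, placing $D$ in $\mathcal{F}$.

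The main obstacle I anticipate is the extremal analysis in part $(ii)$: bootstrapping a single ``bad'' set $S$ witnessing the failure of outexpansion into a fully balanced four-part partition with the delicate arc-density and circular-vertex counts of Definition \ref{def4} requires a careful chain of density inequalities that simultaneously exploit the tightness of $\delta^0(D) \geq (3n-4)/8$ and the small slack $\mu$, with essentially no room to lose more than constant multiples of $\mu$ at any step. Part $(i)$, by contrast, is mainly a bookkeeping exercise: absorbing the exceptional vertices of $L \cup V_0$ without destroying the super-regularity required to invoke the Blow-up Lemma.
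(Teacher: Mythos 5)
The paper does not prove this lemma; it cites it directly from \cite{Keevash(2009)} and moves on, so there is no in-paper argument to compare against. You are therefore reconstructing the Keevash--K\"{u}hn--Osthus dichotomy from scratch. For part $(i)$ your outline is broadly consistent with the standard route: Hamiltonicity of the reduced robust outexpander $R$, absorption of the exceptional vertices in $L\cup V_0$ onto short paths through clusters, rebalancing the cluster sizes, and a Blow-up Lemma lift to $D$. The delicate point is ensuring the absorption and rebalancing steps preserve enough super-regularity on the cycle pairs, but this is the usual bookkeeping and your description is plausible.

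Part $(ii)$, by contrast, contains a genuine gap. The proposed step of ``iterating the same outexpansion-failure argument on $S$ and $T$ separately (each still has enough internal semidegree \ldots to be forced to fail outexpansion again)'' is not sound. If $S$ witnesses the failure of $(\mu,1/3)$-outexpansion, then for every $v\in S$ the whole out-neighbourhood $N^+_R(v)$ lies inside $N^+_R(S)$, a set barely larger than $|S|$, and how many of those out-neighbours lie \emph{inside} $S$ depends entirely on the overlap $S\cap N^+_R(S)$, which can be small. So $R[S]$ (and likewise $R[T]$) need not inherit a semidegree near $3/8$ of its order, and you cannot simply rerun the outexpansion-failure machinery inside each part. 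The way the four clusters actually arise is by intersecting $S$ and its complement with $N^+_R(S)$ (and the analogous in-expansion set), and then leaning heavily on two things your sketch never uses: the exactness of the bound $\delta^0(D)\geq(3n-4)/8$, and the fact that $D$ is \emph{oriented} (no digons). The no-digon constraint is what rules out the simpler near-bipartite extremal picture one would expect for general digraphs and forces the specific cyclic four-class structure of Definition \ref{def4}, including which ordered pairs of classes may carry arcs at all. Without invoking it, there is no reason your argument should land on $\mathcal{F}$ rather than a two-part structure.
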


 The following lemma is come from Claim 2.5-2.7 in \cite{Keevash(2009)} (see pages 20-22 for details).

\begin{lemma}\label{lemma7}
\cite{Keevash(2009)} Let $D$ be an oriented graph in $\mathcal{F}$ with a partition $(D_1,D_2,D_3,D_4)$ and order $n$. If $\delta^0(D)\geq (3n-4)/8$, then there exists an oriented graph $D^\prime$ which is obtained by contracting a few short paths or resetting a few vertices from $D$ satisfies $|D_2|=|D_4|$ in $D^\prime$.
\end{lemma}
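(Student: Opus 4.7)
Assume without loss of generality that $|D_2| \ge |D_4|$, and set $k := |D_2| - |D_4|$. By Definition \ref{def4}, $|D_i| = (1/4 \pm 16\mu)n$ for every $i \in [4]$, so $0 \le k \le 32\mu n$. If $k = 0$ we take $D' := D$, so assume $k \ge 1$. The plan is to produce $D'$ by $k$ disjoint local modifications of $D$, each of which removes exactly one vertex from $D_2$ while leaving $|D_4|$ unchanged.

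The implementation I would use is short-path contraction. Since $a(D_1,D_2)$ and $a(D_2,D_3)$ each exceed $(1-800\mu)n^2/16 = \Theta(n^2)$ and at most $100\sqrt{\mu}\,n$ vertices are non-circular, a routine greedy argument produces $k$ pairwise vertex-disjoint directed paths $P^{(i)} = x^{(i)} y^{(i)} z^{(i)}$ with $x^{(i)} \in D_1$, $y^{(i)} \in D_2$, $z^{(i)} \in D_3$ and every vertex circular: each greedy step forbids only $O(1)$ new vertices, while $k = O(\mu n)$ paths are needed and $\Theta(n^2)$ candidate arcs are available in each of the two bipartite parts. Contract each $P^{(i)}$ to a single new vertex $w^{(i)}$, inheriting the in-neighbourhood of $x^{(i)}$ and the out-neighbourhood of $z^{(i)}$ (outside $\bigcup_j V(P^{(j)})$), and assign $w^{(i)}$ to $D_1$ (any non-$D_2$, non-$D_4$ class suffices for the size arithmetic). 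Per contraction the part-sizes change by $(|D_1|,|D_2|,|D_3|,|D_4|) \mapsto (|D_1|, |D_2|-1, |D_3|-1, |D_4|)$, so after $k$ such contractions $|D_2| = |D_4|$ in $D'$, as required. An alternative implementation, consistent with the phrase ``resetting a few vertices'' in the statement, is simply to relabel $\lceil k/2 \rceil$ carefully chosen vertices of $D_2$ as belonging to $D_4$, which rebalances the two classes in one step.

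\textbf{Main obstacle.} The real technical point is not the existence of the paths---which is straightforward from the density bounds in Definition \ref{def4}---but the need to verify that the modified graph $D'$ retains the quantitative features of $\mathcal F$ that the remainder of Section 3 will invoke: the minimum semidegree should drop by at most $O(1)$ per contraction, the inter-class arc-density bounds should be perturbed by only $O(\mu n)$ in absolute count, and the ``almost all vertices are acceptable and circular'' condition should be preserved up to an additional $O(\mu n)$ bad vertices. Since only $k = O(\mu n)$ vertices and their immediate neighbourhoods are touched, all these drifts sit comfortably inside the $\pm 16\mu$ and $\pm 300\mu$ slacks already built into Definition \ref{def4}, and a short case check based on the circular hypotheses on $x^{(i)}$ and $z^{(i)}$ shows that each new vertex $w^{(i)}$ can be made to satisfy one of the sixteen acceptable types of Definition \ref{def3} when placed into $D_1$. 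The whole verification is therefore bookkeeping rather than a new conceptual step, which is why the statement of the lemma is deliberately unfussy about the exact shape of $D'$.
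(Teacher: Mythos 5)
The proposal is not correct: the proposed contraction destroys membership in $\mathcal{F}$, which is exactly what Lemma \ref{lemma6} needs downstream, and the hard part of the lemma is dismissed as bookkeeping when in fact it is where the real argument lives.

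Concretely, you contract a path $x^{(i)} y^{(i)} z^{(i)}$ with $x^{(i)}\in D_1$, $y^{(i)}\in D_2$, $z^{(i)}\in D_3$ into a vertex $w^{(i)}$ placed in $D_1$, with $w^{(i)}$ inheriting $N^-(x^{(i)})$ and $N^+(z^{(i)})$. But $x^{(i)}$ is a circular $D_1$-vertex, so $N^-(x^{(i)})$ is concentrated in $D_4$, and $z^{(i)}$ is a circular $D_3$-vertex, so $N^+(z^{(i)})$ is \emph{also} concentrated in $D_4$. A vertex whose in- and out-neighbourhoods both lie essentially inside $D_4$ satisfies \emph{none} of the sixteen acceptable types of Definition \ref{def3}, no matter which class you put it in, so your assertion that a ``short case check'' makes $w^{(i)}$ acceptable in $D_1$ is false. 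Note that Algorithm \ref{suanfa1} deliberately requires the contracted path to start \emph{and} end in the same class $D_l$, precisely so the contraction vertex gets the correct $D_{l-1}$-in / $D_{l+1}$-out structure; your $D_1D_2D_3$ paths violate this.

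The deeper gap is this: if a path starts and ends in the same class $D_l$ and uses only ``forward'' arcs $D_i\to D_{i+1}$, its contraction decrements all four $|D_i|$ by the same amount and cannot change $|D_2|-|D_4|$. Any rebalancing contraction must therefore use a ``backward'' (or intra-class) arc such as $D_2\to D_1$, $D_3\to D_2$, $D_3\to D_1$, or $D_2\to D_2$, and the existence of enough such arcs is not free from the density bounds in Definition \ref{def4}. This is exactly what the cited argument from \cite{Keevash(2009)} (Claims 2.5--2.7, reproduced here as Claim \ref{claim5} via Subclaims \ref{claim33}--\ref{claim4}) establishes: one first relocates the ``bad'' vertices to reduce the imbalance, then takes a maximum matching $M$ of backward arcs in $A(D_2,D_1)\cup A(D_2)\cup A(D_3,D_1)\cup A(D_3,D_2)$; if $a(M)\ge s=|D_2|-|D_4|$ these arcs are extended to short paths with both endpoints in the same class and contracted (Algorithm \ref{suanfa1}) to equalize $|D_2|$ and $|D_4|$, and if $a(M)<s$ a degree count against $\delta^0(D)\ge(3n-4)/8$ gives a contradiction. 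That matching-plus-counting step is the substance of the lemma, not a routine verification.
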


In the proof of the next lemma, we often use an operation on contracting a path, which can be summarized as an algorithm here.
\begin{breakablealgorithm} \label{suanfa1}
		\caption{Contracting paths}
		\begin{algorithmic}[1] %每行显示行号
			\Require An oriented graph $D\in \mathcal{F}$ with a partition $(D_1,D_2,D_3,D_4)$ and disjoint paths $P_1,P_2,\ldots,P_k$, where the initial and terminal vertices of each path are located in the same part.
			\Ensure A new oriented graph $H\in \mathcal{F}$ and distinct vertices $p_1,p_2,\ldots,p_k$.
			\State Set $H_0=D$.% Set $P_i=i_1i_2\ldots i_r$ where $i_1,i_r$ belong to the same part $D_l$.
			\For {$j=1$}	
			\While{$j\neq k+1$}	
			\State Set $P_j=j_1j_2\ldots j_r$ where $j_1,j_r\in D_l$, for some $l\in [4]$.
			\State Construct the new oriented graph $H_j$ by contracting the path $P_j$ from $H_{j-1}$ into a new vertex $p_j$ with $N^{-}_{H_j}(p_j)=N^{-}_{H_{j-1}}(i_1)\cap D_{l-1}$ and $N^{+}_{H_j}(p_j)=N^{+}_{H_{j-1}}(i_r)\cap D_{l+1}$, and then put $p_j$ into the part $D_l$.
			\State Let $j=j+1$.
			\EndWhile
		    \EndFor
		    \State Set $H=H_k$.
		\end{algorithmic}
	\end{breakablealgorithm}

Lemma \ref{lemma6} states that there are some oriented graphs in extremal family $\mathcal{F}$ contains any cycle-factor as desired.
%In addition, we will state every oriented graph in $\mathcal{F}$ either contains a cycle-factor as desired or we can arrange that every vertex is good so that we will claim the semidegree of the oriented graph less than $(3n-4)/8$.  and such that $|R'| \geq M'$

\begin{lemma}\label{lemma6}
Given a constant $t$ and a positive real $\mu$ such that $\mu \ll 1/(10^{10}\cdot t^2)$, there exists a number $n_0$ so that every oriented graph $D\in \mathcal{F}$ \emph{(Fig. \ref{tu1})} on $n\geq n_0$ vertices with a partition $(D_1,D_2,D_3,D_4)$ and $|D_2| = |D_4|$ contains $t$ disjoint cycles of lengths $n_1,\ldots , n_t$, where $n = n_1 +\cdots + n_t$ is any positive integer partition of $n$.
\end{lemma}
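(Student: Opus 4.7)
The proof leverages the rigid structure of $D\in\mathcal{F}$. Because most vertices are circular, the dominant arc flow is $D_1\to D_2\to D_3\to D_4\to D_1$, and the bipartite arc sets between $D_2$ and $D_4$ (in each direction) each contain $\Omega(n^2)$ arcs. So, up to a small exceptional set, every cycle of $D$ is built from flow arcs and ``shortcut'' arcs between $D_2$ and $D_4$. Writing $(A_j,B_j,C_j,D_j)$ for the number of visits of cycle $j$ to $D_1$, visits to $D_3$, shortcut arcs $D_2\to D_4$, and shortcut arcs $D_4\to D_2$ respectively, a direct count gives: cycle $j$ uses $A_j$ vertices of $D_1$, $B_j+C_j$ of $D_2$, $B_j$ of $D_3$, $A_j+D_j$ of $D_4$, has length $2A_j+2B_j+C_j+D_j$, and closes iff $A_j-B_j=C_j-D_j$. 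In particular the minimum number of shortcut arcs in a cycle of length $n_i$ is $m_i:=(-n_i)\bmod 4\in\{0,1,2,3\}$.

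The plan has three stages. First, the at most $100\sqrt{\mu}\,n$ non-circular vertices are eliminated by contracting short paths through each of them via Algorithm~\ref{suanfa1} (the same tool used in Lemma~\ref{lemma7}); the contraction is arranged to preserve both the extremal properties of $\mathcal{F}$ and the balance $|D_2|=|D_4|$, yielding an oriented graph $D^{*}$ in which every vertex is circular. Any cycle produced in $D^{*}$ un-contracts to a cycle of the desired length in $D$, provided the lengths of the contracted paths are budgeted when planning cycle lengths in $D^{*}$. Second, for each $i\in[t]$ I choose parameters $(A_i,B_i,C_i,D_i)$ satisfying the per-cycle relations above together with the aggregate equations $\sum_i A_i=|D_1^{*}|$, $\sum_i B_i=|D_3^{*}|$, $\sum_i(B_i+C_i)=|D_2^{*}|$, $\sum_i(A_i+D_i)=|D_4^{*}|$. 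The hypothesis $|D_2|=|D_4|$ together with the per-cycle identity $A_j+D_j=B_j+C_j$ makes these four equations mutually consistent and collapses them to the single residual balance $\sum_i C_i-\sum_i D_i=|D_1^{*}|-|D_3^{*}|$, whose right-hand side has magnitude $O(\mu n)$. Third, the cycles are realized explicitly: for each $i$, I pick circular vertices from each part in the required numbers and thread them via flow arcs, available because circular vertices have $(1-c\sqrt{\mu})$-dense neighborhoods to the adjacent parts, and shortcut arcs at designated positions, available because $a(D_2,D_4)$ and $a(D_4,D_2)$ are both $\Theta(n^2)$ so a greedy selection of shortcut endpoints succeeds at each step. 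Since $t$ is a constant and only $O(n_i)$ vertices are removed per cycle, at every stage enough circular vertices and shortcut endpoints remain.

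The principal obstacle is the parameter-assignment problem in the second stage. Very short cycles (lengths $3,5,6$) admit only a small finite set of admissible patterns and therefore cannot absorb much of the balancing discrepancy; the fix is to fix simple symmetric patterns for all but one cycle---for example, $n_i\equiv 0\pmod 4$: $A_i=B_i=n_i/4$, $C_i=D_i=0$; $n_i\equiv 2\pmod 4$: $A_i=B_i=(n_i-2)/4$, $C_i=D_i=1$; odd-length cycles $n_i\equiv 1,3\pmod 4$ are paired so that their $\pm 1$ imbalances cancel---and then to route the remaining $O(\mu n)$ discrepancy through a single cycle of length $\geq n/t$, which always exists because $\sum n_i=n$. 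A secondary technical subtlety is that shortcut arcs $D_2\leftrightarrow D_4$ are not automatically incident to every circular vertex, but averaging against the $\Theta(n^2)$ global shortcut counts shows that almost all vertices admit $\Omega(n)$ shortcut neighbors, which is more than enough to support the greedy construction.
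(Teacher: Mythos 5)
Your parametric accounting of a cycle via $(A_j,B_j,C_j,D_j)$, the closure identity $A_j-B_j=C_j-D_j$, and the formula $m_i=(-n_i)\bmod 4$ for the minimum shortcut count are all correct under the assumption that every arc used is either a flow arc $D_i\to D_{i+1}$ or a $D_2\leftrightarrow D_4$ shortcut. Two genuine gaps remain, and they are what the paper's construction is specifically designed to overcome.

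First, the aggregate balance in Stage~2 may have no non-negative solution in your model. Summing over the cycle-factor gives $\sum_iA_i=|D_1|$, $\sum_iB_i=|D_3|$, and hence $\sum_iC_i=|D_2|-|D_3|$ and $\sum_iD_i=|D_4|-|D_1|$. The extremal family only guarantees $|D_i|=(1/4\pm16\mu)n$ together with $|D_2|=|D_4|$, so nothing prevents $|D_3|>|D_2|$ or $|D_1|>|D_4|$, in which case $\sum_iC_i$ or $\sum_iD_i$ would have to be negative. The flow-plus-shortcut model is therefore too rigid: you need the intra-part arcs inside $D_1$ and $D_3$ (which $\mathcal{F}$ also supplies in large quantity) to absorb a surplus of $D_1$- or $D_3$-vertices. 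The paper uses exactly these --- it seeds cycles with residue $1\pmod 4$ using an arc inside $D_1$, residue $2\pmod 4$ with a path $D_3D_3D_3$, and balances the final parts by contracting a path $R_{1,3}$ that repeatedly runs through the fragment $D_3D_3D_4D_1D_2$. Your framework, as written, excludes all of those moves.

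Second, Stage~3's ``greedy threading'' does not produce the last cycle. For $t-1$ cycles a greedy construction is fine because you have slack, but the final cycle must be a Hamiltonian cycle of whatever is left, and greedy extension in a dense oriented graph can strand vertices. This is precisely why the paper, after contracting all helper paths and equalizing $|D_1|=|D_2|=|D_3|=|D_4|$, invokes the Blow-up Lemma (Lemma~\ref{lem3}) on the $(\eta,1)$-super-regular $4$-cycle of parts to obtain the spanning cycle. You need some such embedding tool (Blow-up Lemma, absorption, or a robust-expander Hamiltonicity argument) for the terminal step; a local greedy argument is not enough.

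Aside from these two points, your high-level plan (contract non-circular vertices with Algorithm~\ref{suanfa1}, build $t-1$ cycles explicitly using circular vertices and shortcuts, delegate the imbalance to one long cycle) is essentially the paper's strategy; incorporating intra-$D_1$/$D_3$ arcs into the cycle parametrization and replacing the greedy final step with a Blow-up-Lemma embedding would close the gaps.
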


\begin{proof}
On the contrary, suppose that $D$ has no cycle-factor $\mathcal{C}=\{C_1,C_2,\ldots, C_t\}$ with $|V(C_i)|=n_i$ for each $i\in [t]$ and $n=\sum^t_{i=1}n_i$. For convenience, we represent a path using a sequence of the parts corresponding to each vertex along the path. We call a path (resp. cycle) is circular, if the successive vertices of the path (resp. cycle) lie in successive classes. And let the index $i$ be always taken modulo 4, in this lemma. Without loss of generality, suppose that $n_1\geq n_i$ for $i\in [t]$. Also, assume that each of $n_2,n_3,\ldots, n_{l^\prime}$ is equal to 3 and %each $n_i$ in $\{n_{l^\prime+1}, n_{l^\prime+2}, \ldots, n_{l^\prime+l_0}\}$ satisfys $n_i\equiv 0$ $(mod$ 4$)$, each $n_i$ in $\{n_{l^\prime+l_0+1}, n_{l^\prime+l_0+2}, \ldots, n_{l^\prime+l_0+l_1}\}$ satisfys $n_i\equiv 1$ $(mod$ 4$)$, each $n_i$ in $\{n_{l^\prime+l_0+l_1+1}, \ldots,$ $n_{l^\prime+l_0+l_1+l_2}\}$ satisfys $n_i\equiv 2$ $(mod$ 4$)$, and each $n_i$ in $\{n_{l^\prime+l_0+l_1+l_2+1}, \ldots, n_{l^\prime+l_0+l_1+l_2+l_3}\}$ satisfys $n_i\equiv 3$ $(mod$ 4$)$.

$$n_i=
\begin{cases}
=0 \quad i\in \{l^\prime+1, l^\prime+2, \ldots, l_0\};\\
=1 \quad i\in \{l_0+1, l_0+2, \ldots, l_1\};\\
=2 \quad i\in \{l_1+1, l_1+2, \ldots, l_2\};\\
=3 \quad i\in \{l_2+1, l_2+2, \ldots, l_3\}.\\
\end{cases}
(\text{mod\ }4)$$

%To construct a contradiction, we will find the cycle factor $\mathcal{C}$ in $D$. We first seek out and remove all triangles in the cycle factor $\mathcal{C}$. And then to find cycles of lengths $l^\prime+1, l^\prime+2, \ldots, l_3$, contract $l_3-l_0$ appropriate disjoint paths, resulting in oriented graph $H_1$. Then remove the set of contraction vertices $Z$ from $H_1$ to obtain oriented graph $H_2$. To ensure that all vertices are circular, contract additional disjoint paths to obtained a new oriented graph, denoted $H_3$. We added the set $Z$ back into $H_3$ to obtain oriented graph $H_4$. There are disjoint cycles in $H_4$ and these cycles were the desired cycles of lengths $l^\prime+1, l^\prime+2, \ldots, l_3$ in $D$. Finally, for the remaining oriented graph $H_5$, we will demonstrate that $H_5$ is Hamiltonian, that is, there is the cycle factor $\mathcal{C}$ in $D$, a contradiction.

Firstly, we will find all the triangles which are needed. Constituting $l^\prime-1$ disjoint triangles such that every triangle has a vertex in $D_2$, a vertex in $D_3$ and a vertex in $D_4$. Recall that there is a few vertices are non-circular and $a(D_2,D_4)\geq (1/2- 300\mu)$, which implies that there are $l^\prime-1$ disjoint arcs from $D_2$ to $D_4$ with endvertices are all circularity. Owing to the definition of circular, we can obtain $l^\prime-1$ disjoint triangles as desired. From now on, define $S$ to be the set of $l^\prime-1$ triangles. Let $D^\prime=D-S$, we conclude that $D^\prime \in \mathcal{F}$ and $|D_2|=|D_4|$ in $D^\prime$, since $l^\prime$ is a constant.

In order to find cycles with given lengths, we will find $l_3-l_0$ suitable disjoint paths.
% so that, for each contraction vertex, we just take a cycle includes the vertex whose length is $4k$, for some $k\in \mathbb{N}$.
Select $l_1-l_0$ disjoint arcs whose two endvertices are circular vertices in $D_1$ as $l_1-l_0$ paths. And pick $l_2-l_1$ disjoint paths such that every path is a circular path $D_3D_3D_3$. And then find $l_3-l_2$ disjoint circular paths, where each path is shaped like $D_2D_4D_1D_2$. Note that we can ensure that all of above paths are disjoint, since the numbers of arcs in $D_1,D_3$, from $D_i$ to $D_{i+1}$ and from $D_2$ to $D_4$ are large enough. These yield that a path system $\mathcal{P}=\{P_{l_0+1},P_{l_0+2},\ldots, P_{l_3}\}$. Now, the new oriented graph $H_1$ and contraction vertices $p_{l_0+1}, p_{l_0+2}, \ldots, p_{l_3}$ are obtained by applying Algorithm \ref{suanfa1} with the oriented graph $D^\prime$ and the path system $\mathcal{P}$. It is easy to check that all contraction vertices are circular and $|D_2|=|D_4|$ in $H_1$. Set $Z=\{p_{l_0+1}, p_{l_0+2},$ $\ldots, p_{l_3}\}$. Removing the vertices in $Z$ and arcs which adjacent with $Z$ from $H_1$, we can obtain a new oriented graph $H_2$.

Actually, we need all vertices to be circular. Thereby, our purpose is to find some disjoint short paths containing all non-circular vertices so that all vertices are circular after contracting these paths. Assume $v_1,\ldots, v_r$ are non-circular vertices in $H_2$. Next, we will explain that the following disjoint paths $Q_1,Q_2,\ldots,Q_r$ are paths that we want. For each $v_i$, choose a circular out-neighbour $v^+_i$ and a circular in-neighbour $v^-_i$ so that they are distinct. Then there exist disjoint paths $Q_1,Q_2,\ldots,Q_r$ such that for each $i\in [r]$, the following hold.\\
 \indent $ \bullet$ The path $Q_i$ starting at $v^-_iv_iv^+_i$ and ending at a circular vertex which lies in the same class as $v^-_i$ with length at most 7.\\
 \indent $ \bullet$ The path $Q_i$ is a circular path.

Applying Algorithm \ref{suanfa1} with the oriented graph $H_2$ and the paths $Q_1,Q_2,\ldots,Q_r$, we obtain a new oriented graph $H_3$ and contraction vertices $q_1,q_2,\ldots,q_r$. For each $j\in [r]$, since the endvertices of $Q_i$ are circular vertices, vertex $q_j$ is circular, which is easy to check by the construction of $q_j$. Now every vertex of $H_3$ is circular.

We construct a new oriented graph $H_4$ by adding the vertex set $Z$ to $H_3$ and arcs between $Z$ and $V(H_3)$ in $A(H_1)$. Hence $|D_2| = |D_4|$ in $H_4$ and we still have that
\begin{equation}\label{1}
\begin{aligned}
||D_i|-|D_j||\leq |(1/4+ 16\mu)n-(1/4-16\mu)n|+t+r\leq 300\sqrt{\mu}n.
\end{aligned}
\end{equation}
Recall that $a(D_2, D_4), a(D_4, D_2) > (1/2- 300\mu)n^2/16$ in $D$, which gives a matching $M_0$ of size $2\times 10^4\sqrt{\mu} n$, where $10^4\sqrt{\mu} n$ arcs are from $D_2$ to $D_4$ and $10^4\sqrt{\mu} n$ arcs are from $D_4$ to $D_2$ in $H_4$.

Next, we begin by finding disjoint cycles $C_{l^\prime+1},C_{l^\prime+2},\ldots, C_{t}$. Due to the definition of circularity and $\mu \ll 1/(10^{10}\cdot t^2)$, for any two circular vertices $y\in D_{j-2}, x\in D_j$, we actually get that
\begin{equation}\label{2}
\begin{aligned}
|N^+(y)\cap N^-(x)\cap D_{j-1}|\geq n/4-6000\sqrt{\mu} n\geq n/4-n/4t+100\sqrt{\mu} n+2\times 10^4\sqrt{\mu} n.
\end{aligned}
\end{equation}
This implies that there are at least $n/4-n/4t+2\times 10^4\sqrt{\mu} n$ circular vertices in $N^+(y)\cap N^-(x)\cap D_{j-1}$. However, since $n_1\geq n_i$ for $1\leq i \leq t$, we get $n_1\geq n/t$. Thus $n-n_1\leq n-n/t$. Also, it is clear from (\ref{2}) that there are $l_1+l_2+l_3$ disjoint cycles $C^\prime_{l^\prime+1},\ldots,C^\prime_t$ satisfying the following statements.\\
\indent $ \bullet$ For each $i\in [l^\prime+1,t]$, $C^\prime_i$ with length $\lfloor n_i/4\rfloor \times 4$ is a circular cycle which avoids the vertices in the matching $M_0$.\\
\indent $ \bullet$ For each $i\in [l_0+1,t]$, $C^\prime_i$ contains the corresponding contraction vertex $p_i$.\\
Consider each $C^\prime_i$, if we replace the contraction vertex $p_i$ with the path $P_i$, we can obtain a cycle of length $n_i$ in $D$, for $i\in [l_0+1,t]$. Set $U=V(C^\prime_{l^\prime+1}\cup C^\prime_{l^\prime+2}\cup \cdots \cup C^\prime_t)$ and $H_5:=H_4-U$. Clearly, $M_0\subseteq H_5$. Moreover, $|D_2|=|D_4|$ still holds in $H_5$.
%Recall that  has no cycle-factor $\mathcal{C}$, then $H_5$ is not Hamiltonian.Since the construction of $\mathcal{F}$ implies that almost all pairs of circular vertices in $D_3$ are joined by an edge.Note that the $D_3D_3D_4D_1D_2$ fragment uses one circular vertex in $D_1,D_2,D_4$, two vertices in $D_3$.

Recall that $D$ does not have a cycle-factor $\mathcal{C}$, then $H_5$ cannot be Hamiltonian. However, we will claim that $H_5$ is Hamiltonian, which contradicts the hypothesis and proves the lemma. In order to apply Blow-up Lemma, we also need $|D_1|=|D_2|=|D_3|=|D_4|$. we can do this by finding two suitable disjoint paths to contract. Without loss of generality, assume that $|D_1| < |D_3|$ in $H_5$. Let $s := |D_3|- |D_1|$ in $H_5$, (\ref{1}) implies that $s<300\sqrt{\mu} n$. Due to the structure of $D$, almost all pairs of vertices in $D_3$ are connected by an arc. Let $H_6=H_5-V(M_0)$. Thus we greedily find a path $R_{1,3}$ of the form
 $$D_3D_3D_4D_1D_2 \cdots D_3D_3D_4D_1D_2D_3,$$
 where the fragment $D_3D_3D_4D_1D_2$ consists of circular vertices and it repeats $s$ times. So $R_{1,3}$ starts with an arc between two circular vertices in $D_3$. Let $H_7$ be the oriented graph by applying Algorithm \ref{suanfa1} with the oriented graph $H_6$ and the path $R_{1,3}$. Then add the vertex set $V(M_0)$ and arcs between $V(M_0)$ and $V(H_7)$ in $A(H_5)$, assuming there is no confusion, we still call it $H_7$. It follows that $|D_1| = |D_3|$, $|D_2| = |D_4|$ in $H_7$ and all vertices of $H_7$ are still circular. Without loss of generality, suppose that we have $|D_2| > |D_1|$ in $H_7$. Let $s := |D_2|- |D_1|$,  (\ref{1}) implies that $s<300\sqrt{\mu} n$. By using the arcs in the matching $M_0$, we can find a path $R_{2,1}$ of the form
$$D_2D_4D_1D_2D_3D_4D_2D_3D_4D_1 \cdots D_2D_4D_1D_2D_3D_4D_2D_3D_4D_1D_2,$$ where the fragment $D_2D_4D_1D_2D_3D_4D_2D_3D_4D_1$ appears repeatedly for $s$ times. It is easy to check that $D_7$ has such a path. Likewise, by contracting $R_{2,1}$, we obtain an oriented graph $H_8$. In $H_8$, we get that
\begin{equation*}
\begin{aligned}
|D_1| = |D_2| = |D_3| = |D_4|&\geq (n_1-6r-5\times 300\sqrt{\mu}n-10\times 300\sqrt{\mu}n)/4\\
&\geq (n_1-10^4\sqrt{\mu}n)/4.
\end{aligned}
\end{equation*}
Note that, all vertices are still circular in $H_8$. Namely, for every vertex $v\in D_i$, there are at least $(1-ct\sqrt{\mu}) |D_{i+1}|$ out-neighbours in $D_{i+1}$.
%$$|D_1| = |D_2| = |D_3| = |D_4|\geq (n_1-10^4\sqrt{\mu}n)/4.$$

Let $F'$ be the 4-partite graph with vertex classes $D_1,D_2,D_3, D_4$ in $H_8$, where the bipartite graphs induced by $(D_i,D_{i+1})$ are all complete. Clearly $F'$ is Hamiltonian. On the other hand, assume that $G$ is the underlying graph corresponding to the set of edges oriented from $D_i$ to $D_{i+1}$ in $H_8$, for $i\in [4]$.
Pick $\eta$ with $c t \sqrt{\mu}\ll \eta^2$. Since all vertices of $H_8$ are still circular, for any subset $X\subseteq D_i$ and $Y\subseteq D_{i+1}$ with $|X|\geq \eta |D_i|$, $|Y|\geq \eta |D_{i+1}|$, it follows that $$d(X,Y)\geq \dfrac{(|Y|-c t\sqrt{\mu}|D_{i+1}|)|X|}{|X||Y|}\geq 1-\dfrac{c t\sqrt{\mu}}{\eta}.$$ Therefore, each pair $(D_i,D_{i+1})$ is $\eta$-regular pair in $G$. Further, each pair $(D_i,D_{i+1})$ is $(\eta,1)$-super-regular pair as all vertices of $H_8$ are circular. Also, $G$ is simple. So we can apply Lemma \ref{lem3} with $k = 4$, $\Delta = 2$ to get a Hamiltonian cycle in $G$. Since the construction of $G$, $H_8$ is Hamiltonian. Recall that $H_8$ is obtained by $H_5$ contracting two paths, this implies that $H_5$ is Hamiltonian. This contradiction completes the proof of the lemma.
\end{proof}

%For more details of the following lemma, we refer the reader to [7].

\subsection{Proof of Theorem \ref{main1} }
Define constants $M', \varepsilon, d, \mu, \tau, \eta, n_0$ satisfying
\begin{equation}\label{5}
\begin{aligned}
n^{-1/6}_0\ll 1/M'\ll \varepsilon^2 \ll d \ll \mu \ll \tau \ll \eta \ll 1/t.
\end{aligned}
\end{equation}
 Suppose that $D$ is an oriented graph on $n\geq n_0$ vertices with $\delta^0(D)\geq (3n-4)/8$. Assume that Theorem \ref{main1} is false. Namely, $D$ contains no cycle-factor $\mathcal{C}$ with cycles $C_1,C_2,\ldots, C_t$ whose orders are $n_1,n_2,\ldots,n_t$, respectively. Without loss of generality, assume that $n_1\geq n_2\geq \cdots n_j\geq \varepsilon^2 n > n_{j+1}\geq \cdots\geq n_t$. Let's start by finding disjoint cycles whose lengths less than $\varepsilon^2 n$. This can be done by applying Lemma \ref{lemma1}. These yield $t-j$ disjoint cycles $C_{j+1}, C_{j+2}, \ldots, C_t$ where each cycle $C_i$ has the length $n_i<\varepsilon^2 n$. Define $D^*$ to be the oriented graph $D-\bigcup^t_{i=j+1} C_i$. It is easy to check that the semidegree of $D^*$ at least $(3/8-t \varepsilon^2 )n$. Apply Diregularity Lemma (Lemma \ref{lem1}) with parameters
 $\varepsilon^2, d, M^\prime$ for the digraph $D^*$ to obtain a partition $V_0,V_1,V_2,\ldots,V_k$ and a reduced oriented graph $R$ by Lemma \ref{lem2}. Further, we have $\delta^0(R)\geq (3/8-t\varepsilon^2-d-3\varepsilon)|R|\geq (3/8-3d)|R|$, as $(\ref{5})$.%Lemma \ref{lem4} $(i)$ states that $D^*$ is a robust $(\mu-t\cdot \varepsilon, \tau)$-outexpander.

\begin{claim}\label{claim2}
$R$ is a robust $(\mu, \tau)$-outexpander.
\end{claim}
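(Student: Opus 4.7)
The strategy is to establish the stronger statement that $R$ is a robust $(\mu,1/3)$-outexpander, which by Fact \ref{fac1} immediately upgrades to a robust $(\mu,\tau)$-outexpander, using the bound $\delta^0(R)\geq (3/8-3d)|R|$ that was already verified right before the claim. So the entire task reduces to ruling out the possibility that $R$ fails to be a robust $(\mu,1/3)$-outexpander.

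I would argue by contradiction: suppose $R$ is not a robust $(\mu,1/3)$-outexpander. Let $L:=V(D)\setminus V(D^*)=V(C_{j+1})\cup\cdots\cup V(C_t)$ denote the vertices of the short cycles already extracted by Lemma \ref{lemma1}. Since each $n_i<\varepsilon^2 n$ for $i>j$, we have $|L|\leq (t-j)\varepsilon^2 n\leq t\varepsilon^2 n$, which fits the set-size hypothesis of Lemma \ref{lemma3}. Because $R$ is precisely the reduced oriented graph of $D-L=D^*$ furnished by Lemmas \ref{lem1} and \ref{lem2}, we may invoke Lemma \ref{lemma3}(ii) to conclude $D\in\mathcal{F}$.

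The next step is to contradict the standing assumption of the proof of Theorem \ref{main1}, namely that $D$ contains no cycle-factor with prescribed cycle lengths $n_1,\ldots,n_t$. Since Lemma \ref{lemma6} requires $|D_2|=|D_4|$, I first apply Lemma \ref{lemma7} to $D$, obtaining an oriented graph $D^\prime\in\mathcal{F}$ with $|D_2|=|D_4|$ produced from $D$ by contracting a bounded number of short paths and resetting a bounded number of vertices. I then apply Lemma \ref{lemma6} to $D^\prime$ with a positive integer partition of $|D^\prime|$ chosen so that, after undoing all of the path contractions inside their respective cycles, the resulting cycles in $D$ have exactly lengths $n_1,\ldots,n_t$. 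The lifted cycles form the forbidden cycle-factor $\mathcal{C}$ in $D$, contradicting our initial supposition. Hence $R$ is a robust $(\mu,1/3)$-outexpander, and Fact \ref{fac1} delivers the desired claim.

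The main obstacle is the bookkeeping at the end: specifying the partition of $|D^\prime|$ so that the uncontraction recovers cycles of exactly the prescribed sizes $n_1,\ldots,n_t$ in $D$. This is not a real difficulty once one observes that Lemma \ref{lemma7} contracts only $O(1)$ many short paths, while Lemma \ref{lemma6} grants complete freedom to prescribe an arbitrary positive integer partition of $|D^\prime|$. It therefore suffices to distribute each contraction vertex to a target cycle whose prescribed length exceeds the length of the corresponding contracted path, which is always possible for $n$ sufficiently large relative to $t$ and the contraction constants in Lemma \ref{lemma7}.
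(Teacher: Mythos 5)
Your proof is correct and follows the paper's argument exactly: contradiction plus Lemma \ref{lemma3}(ii) to place $D$ in $\mathcal{F}$, then Lemmas \ref{lemma7} and \ref{lemma6} to manufacture the forbidden cycle-factor, and finally Fact \ref{fac1} to upgrade from a robust $(\mu,1/3)$-outexpander to a robust $(\mu,\tau)$-outexpander. The extra detail you supply on verifying $|L|\leq t\varepsilon^2 n$ and on the uncontraction bookkeeping when lifting the cycle-factor of $D'$ back to $D$ simply makes explicit steps the paper leaves implicit.
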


\begin{proof}
Suppose that $R$ is not a robust $(\mu, 1/3)$-outexpander. It follows from Lemma \ref{lemma3} that $D$ is an extremal oriented graph. Owing to the semidegree of $D$ and Lemma \ref{lemma7}, there exists an oriented graph obtained by contracting a few short paths or resetting a few vertices from $D$ satisfies the hypothesis of Lemma \ref{lemma6}. Namely, there is a cycle-factor with lengths $n_1,n_2,\ldots,n_t$, a contradiction. Thereby, $R$ is a robust $(\mu, 1/3)$-outexpander. Combining with Fact \ref{fac1}, $R$ is a robust $(\mu, \tau)$-outexpander.
\end{proof}

In the following, we will apply Lemma \ref{lemma3} to find disjoint cycles with lengths $n_1,n_2,\ldots,n_j$. To do this, we split $V(D^*)$ into $S_1, S_2,\ldots,S_j$ with almost inherit the semidegree condition of $D^*$. In this process, "Chernoff bound" is essential, which states that $\mathbb{P}(|X -\mathbb{E}X| > a) < e^{-a^2/(3\cdot \mathbb{E}X)}$, where $X$ is the hypergeometric random variable and $\mathbb{E}X$ is the expectation of $X$. Set $\xi_i=\lfloor\dfrac{n_i}{k}\rfloor\cdot \dfrac{1}{|V_i|}$. Namely, $\xi_i>\varepsilon^2$, for each $i\in [j]$.
\begin{claim}\label{claim3}
There exists a partition of $V(D^*)$ into $j$ sets $(S_1, S_2,\ldots,S_j)$ such that the following properties hold.\\
\indent \emph{(}$i$\emph{)} $|S_i|=n_i\geq \varepsilon^2 n$ and $\delta^{0}(D[S_i])\geq 2\eta |S_i|$, for $i\in [j]$.\\
\indent \emph{(}$ii$\emph{)} For each $i\in [j]$, there is an oriented reduced graph $R_i$ with parameters $(\varepsilon^2, 5d/6)$ corresponding to the partition $S_{i,1},\cdots ,S_{i,k}$ of $S_i$. Moreover, every $R_i$ is isomorphic with $R$.
\end{claim}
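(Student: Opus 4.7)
The plan is to define $(S_1,\ldots,S_j)$ by a random partition that respects the regularity partition $V_0,V_1,\ldots,V_k$ of $D^*$. For each cluster $V_l$ with $l\in[k]$, I partition $V_l$ uniformly at random, and independently of the other clusters, into $j$ sub-clusters $S_{i,l}$ $(i\in[j])$ of size $\lfloor n_i/k\rfloor$, then distribute $V_0$ together with the small residue among the blocks so that $|S_i|=n_i$ exactly. Setting $S_i:=\bigcup_{l\in[k]} S_{i,l}$ gives the desired sub-partition $S_{i,1},\ldots,S_{i,k}$ of $S_i$, with $|S_{i,l}|=\xi_i|V_l|\pm 1$.

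To establish (i), fix $v\in V(D^*)$ and $i\in[j]$. Since $\delta^0(D^*)\geq (3/8-t\varepsilon^2)n$ and $|V_0|\leq \varepsilon^2 n$, the quantity $d^+_{D^*[S_i]}(v)$ is, up to an additive $|V_0|$-term, a sum of independent hypergeometric random variables (one per cluster $V_l$) with total mean at least $\xi_i\cdot d^+_{D^*}(v)\geq (3/8-2t\varepsilon^2)n_i$. Applying the Chernoff-type estimate from the excerpt with deviation $a=\varepsilon n_i$ yields
\[
\mathbb{P}\bigl(d^+_{D^*[S_i]}(v)<(3/8-3t\varepsilon^2)n_i\bigr)<e^{-\Omega(\varepsilon^2 n_i)},
\]
and the symmetric estimate for in-degrees. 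Because $n_i\geq \varepsilon^2 n$ and $j,t$ are constants, a union bound over the at most $nj$ pairs $(v,i)$ shows that with positive probability every semidegree in every $D[S_i]$ exceeds $(3/8-3t\varepsilon^2)n_i$, which dominates $2\eta|S_i|$ under the hierarchy (\ref{5}).

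To establish (ii), fix $i\in[j]$. For every arc $ll'\in A(R)$, the pair $(V_l,V_{l'})$ is $\varepsilon^2$-regular of density at least $d$ in $D^*$. Since $\xi_i\geq \varepsilon^2$, the standard slicing fact that uniformly random linear-sized subsets of the two sides of an $\varepsilon^2$-regular pair again form an $\varepsilon^2$-regular pair with density within $\varepsilon^2$ of the original (verified by applying the regularity definition together with a Chernoff tail bound on the edge count in each test subset) shows that with high probability $(S_{i,l},S_{i,l'})$ is $\varepsilon^2$-regular of density at least $d-\varepsilon^2\geq 5d/6$. The pairs $ll'\notin A(R)$ remain sparse, again by Chernoff. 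Taking the reduced oriented graph $R_i$ of $D^*[S_i]$ with parameters $(\varepsilon^2,5d/6)$ relative to the partition $S_{i,1},\ldots,S_{i,k}$ then yields $A(R_i)=A(R)$ under the identity on $[k]$, whence $R_i\cong R$.

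The principal obstacle is to arrange all of these random events to hold simultaneously: the semidegree lower bound at every vertex of $D^*$, and the $\varepsilon^2$-regularity plus density lower bound on each of the at most $k^2$ candidate pairs, for each of the $j$ blocks. However, $k\leq M$ is bounded by the Diregularity Lemma and $j,t$ are constants while $n\to\infty$, so each individual failure probability is at most $e^{-\Omega(\varepsilon^2 n)}$ and the total number of events being controlled is only polynomial in $n$. A single union bound therefore produces a partition meeting (i) and (ii), proving the claim.
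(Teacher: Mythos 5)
Your approach is essentially the one the paper uses: partition each cluster $V_l$ randomly, control the semidegrees inside each random piece with a Chernoff-type concentration inequality for part ($i$), and invoke the slicing behaviour of regular pairs for part ($ii$). The only cosmetic difference is that you split each $V_l$ into parts of exact prescribed size $\lfloor n_i/k\rfloor$ (hypergeometric model), whereas the paper assigns each vertex of $V_l$ to $S_{i,l}$ independently with probability $\xi_i$ (binomial model), which makes the cluster sizes random and leaves an implicit extra repair step before one can assert $|S_i|=n_i$ exactly; your version is the cleaner of the two and the concentration estimate is the same.

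The one genuine gap is in your justification of the $\varepsilon^2$-regularity of the sliced pairs $(S_{i,l},S_{i,l'})$ in ($ii$). You describe it as a ``standard slicing fact'' that a uniformly random linear-sized sub-pair of an $\varepsilon^2$-regular pair is again $\varepsilon^2$-regular, to be ``verified by applying the regularity definition together with a Chernoff tail bound on the edge count in each test subset.'' That argument does not close: the regularity condition quantifies over all test sets $X\subseteq S_{i,l}$, $Y\subseteq S_{i,l'}$ with $|X|>\varepsilon^2|S_{i,l}|$, $|Y|>\varepsilon^2|S_{i,l'}|$, of which there are exponentially many, so a union bound over per-pair Chernoff events is unavailable. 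The actual (deterministic) Slicing Lemma only yields $\max\{\varepsilon^2/\xi_i,\,2\varepsilon^2\}$-regularity, and since $\xi_i$ can be as small as roughly $\varepsilon^2$ (the only lower bound comes from $n_i\geq\varepsilon^2 n$), the resulting parameter can be $\Theta(1)$. To be fair, the paper's own proof of ($ii$) is silent on exactly this point — it verifies the density drop but simply asserts the regularity constant — so you are matching the paper's level of rigour; nevertheless, the mechanism you propose would not work if one tried to carry it out. A clean repair is to strengthen the hierarchy and apply the Diregularity Lemma with a smaller parameter (say $\varepsilon^4$ in place of $\varepsilon^2$), so that the Slicing Lemma, applied with slicing ratio at least $\varepsilon^2$, returns pairs that are $\varepsilon^2$-regular with density at least $d-\varepsilon^4\geq 5d/6$.
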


% as $n \rightarrow \infty$ \rightarrow 0
%we will show that there is a partition $(S_{1,l},S_{2,l},\ldots,S_{j,l})$ of $V_l$, and there is a partition $(V_{1,0},V_{2,0},\ldots, V_{j,0})$ of $V_0$, so that $\delta^{0}(D[S_i])\geq (3/8-\gamma)|S_i|$ where $S_i=\bigcup_{l\in [k]}S_{i,l}\cup V_{i,0}$. C
\begin{proof}
($i$). For each $i\in [j]$, we will show that $S_i=\bigcup_{l\in [k]}S_{i,l}\cup V_{i,0}$, where $S_{i,l}$ is a subset of $V_i$ with order $\xi_i |V_i|$ and $V_{i,0}$ is a subset of $V_0$ with order $n_i-\xi_i k|V_i|$, so $|S_i|=n_i$. For each $V_l$, consider a random partition of $V_l$ into $j$ sets $S_{1,l},S_{2,l},\ldots,S_{j,l}$. By assigning every vertex $x\in V_l$ to $S_{i,l}$ with probability $\xi_i$ independently. For every vertex $v\in V(D^*)$, let the random variable $A^+_{v,i}$ (resp., $A^-_{v,i}$) calculate the number of out-neighbours (resp., in-neighbours) of $v$ in $S_{i,l}$. It is not hard to see that
$$ \mathbb{E}A^+_{v,i} = \dfrac{d^+_{V_l}(v)}{|V_l|}\cdot |S_{i,l}|= \xi_i d^+_{V_l}(v).$$
Owing to Chernoff bound, this yields that
\begin{equation*}
\begin{aligned}
\mathbb{P}(A^+_{v,i}-\mathbb{E}A^+_{v,i}< -n^{2/3})< e^{-\dfrac{n^{4/3}}{3\cdot d^+_{V_l}(v)}}.
\end{aligned}
\end{equation*}
This implies that $$\sum_{v\in V(D^*)}\mathbb{P}(A^+_{v,i}-\mathbb{E}A^+_{v,i}< -n^{2/3}) <n e^{-\dfrac{n^{4/3}}{3\cdot d^+_{V_l}(v)}}.$$ Analogously we can get that $$\sum_{v\in V(D^*)}\mathbb{P}(A^-_{v,i}-\mathbb{E}A^-_{v,i}< -n^{2/3}) <n e^{-\dfrac{n^{4/3}}{3\cdot d^+_{V_l}(v)}}.$$
%Namely, standard Chernoff bound implies that the probability that a vertex $v\in V(D^*)$ whose semidegree in $S_{i,l}$ is too large or too small is exponentially small in $n$.
So with non-zero probability we obtain that $d^{\sigma}_{S_{i,l}}(v) \geq \xi_i d^{\sigma}_{V_l}(v)-n^{2/3}$ for every vertex $v\in V(D^*)$, every $i\in [k]$ and $\sigma \in \{+,-\}$. For each $i\in [j]$, arbitrarily pick $n_i-\xi_i k|V_l|$ vertices from the set $V_0$, and form the set $S_i$ with the set $\bigcup^k_{l=1}S_{i,l}$. For $\sigma\in \{-,+\}$, (\ref{5}) and $|S_i|=n_i\geq \varepsilon^2 n$ yield that
$$\delta^{\sigma}(D^*[S_i])\geq (3/8-t\cdot \varepsilon^2)|S_i|-n^{2/3}
\geq (3/8-t\cdot \varepsilon^2-n^{-1/3}/\varepsilon^2)|S_i|>2\eta |S_i|,  \text{ as } n \rightarrow \infty.$$

($ii$). For every edge $l_1l_2\in E(R)$, it follows from the construction of $R$ that $A[V_{l_1}, V_{l_2}]$ is $\varepsilon^2$-regular pair with density at least $d$. From the definition of regularity and $|S_{i,l_1}|\geq \varepsilon |V_{l_1}|,$ $|S_{i,l_2}|\geq  \varepsilon |V_{l_2}|$, it can be concluded that $|d(S_{i,l_1}, S_{i,l_2})-d(V_{l_1}, V_{l_2})| < \varepsilon^2$, that is $d(S_{i,l_1}, S_{i,l_2})\geq d- \varepsilon^2\geq 5d/6$. On the other hand, there is no arc from $S_{i,l_1}$ to $S_{i,l_2}$ whenever $l_1l_2\notin E(R)$. Hence, there is an oriented reduced graph $R_i$ with parameters $(\varepsilon^2, 5d/6)$ corresponding to the partition $S_{i,1},\cdots ,S_{i,k}$ of $S_i$. Clearly, $V(R_i)=V(R)$. These analyses make it obvious that $R_i$ is isomorphic with $R$.
\end{proof}

Together with Lemma \ref{lemma3} ($i$), each oriented graph $D^*(S_i)$ is Hamiltonian. Thereby, we find disjoint cycles of lengths $n_1,n_2,\ldots,n_{t}$, a contradiction. This completes the proof.
\hfill $\Box$
%%In more detail, we need to further classify acceptable vertices.

\section{Strongly Hamiltonian-connected in oriented graphs}
Before proving Theorem \ref{cor2}, we show a key lemma to the proof of Theorem \ref{cor2}. Define $\beta=(1/100 + c \sqrt{\mu})n/4$.
%\begin{definition}\emph{(}A vertex is good\emph{)}\label{def4}
When $|D_2|>|D_4|$, we call a vertex \emph{good} if it is acceptable, and it belongs to $D_4$ or has one of the properties
$D_1:(D_2)_{<\beta}(D_3)_{<\beta}$, $D_2:(D_1)^{<\beta}(D_2)^{<\beta}_{<\beta}(D_3)_{<\beta}$, $D_3 :(D_1)^{<\beta}(D_2)^{<\beta}$. Similarly, when $|D_2|<|D_4|$, we call a vertex \emph{good} if it is acceptable, and the vertex in $D_2$ or has one of the properties $D_1:(D_2)_{<\beta}(D_3)_{<\beta}$, $D_3 :(D_1)^{<\beta}(D_2)^{<\beta}$ or $D_4:(D_1)_{<\beta}(D_4)^{<\beta}_{<\beta}(D_3)^{<\beta}$. Naturally, if a vertex is not good, we call it a \emph{bad} vertex. The idea of proving Lemma \ref{main2} is very similar to proving Theorem \ref{main1}, so here we focus on the placement of the vertex $x$. Here, we still denote $\alpha=(1/100 - c\sqrt{\mu})n/4$.

\begin{lemma}\label{main2}
Suppose $D$ is an oriented graph on $n\geq n_0$ vertices with $\delta^0(D)\geq 3n/8$, where $n_0$ is some integer. The oriented graph $D^\prime$ obtained from $D$ by adding a vertex $x$ such that the semidegree of $x$ in $D^\prime$ is at least $4\alpha$, then $D^\prime$ is Hamiltonian.
\end{lemma}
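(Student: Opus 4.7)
The plan is to mirror the architecture of the proof of Theorem \ref{main1}: apply the Diregularity Lemma to the base graph $D$ (placing $x$ into the exceptional set so that the cluster partition and the reduced oriented graph $R$ are unaffected by $x$), and split into two cases via Lemma \ref{lemma3}(ii). In the first case $R$ is a robust $(\mu,1/3)$-outexpander, hence by Fact \ref{fac1} also a robust $(\mu,\tau)$-outexpander; in the second case $D\in\mathcal{F}$ with some extremal partition $(D_1,D_2,D_3,D_4)$. In either case the task is to absorb the low-semidegree vertex $x$, whose semidegree $4\alpha\approx n/100$ is far below what Theorem \ref{the1} applied to $D'$ would require, into a Hamiltonian cycle of $D$.

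In the non-extremal case, the hypothesis $\delta^0(D)\geq 3n/8$ together with $|N^-(x)|,|N^+(x)|\geq 4\alpha$ produces an arc $uv\in A(D)$ with $u\in N^-(x)$ and $v\in N^+(x)$: indeed an averaging argument yields $\Omega(n^2)$ such arcs. Fix one such arc. I would then produce a Hamiltonian cycle of $D$ that contains $uv$ and simply replace $uv$ by the two arcs $ux$ and $xv$ to obtain a Hamiltonian cycle of $D'$. Forcing a Hamiltonian cycle through a prescribed arc is a minor modification of the proof of Lemma \ref{lemma3}(i): the Keevash--K\"uhn--Osthus construction is flexible enough to include any single fixed arc that lies between two cluster classes $V_i, V_j$ with $ij\in E(R)$, which covers a $1-O(\varepsilon)$ proportion of the arcs of $D$. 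An alternative is to absorb $x$ directly using the absorbing path technique for robust outexpanders, since $4\alpha$ is vastly more than the semidegree required to insert a single vertex into a prescribed absorbing structure.

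In the extremal case, by pigeonhole there exist $i^*, j^*\in [4]$ with $|N^-(x)\cap D_{i^*}|\geq \alpha$ and $|N^+(x)\cap D_{j^*}|\geq \alpha$. A short case check against Definition \ref{def3} shows that for every pair $(i^*, j^*)$ there is a class $D_k$ in which placing $x$ renders it acceptable: for $k=1$ we need $j^*\in\{1,2\}$ and $i^*\in\{1,4\}$; for $k=2$, $j^*\in\{3,4\}$ and $i^*\in\{1,4\}$; for $k=3$, $j^*\in\{3,4\}$ and $i^*\in\{2,3\}$; and for $k=4$, $j^*\in\{1,2\}$ and $i^*\in\{2,3\}$, and these four choices cover every $(j^*, i^*)\in [4]^2$. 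Once $x$ has been placed, I would follow the proof of Lemma \ref{lemma6} with $x$ treated as one extra non-circular vertex: use Algorithm \ref{suanfa1} to contract a short circular path through $x$ built from a circular in-neighbour and a circular out-neighbour of $x$ in the appropriate classes (exactly as done for $v_1,\dots,v_r$ in the proof of Lemma \ref{lemma6}), rebalance the classes with the two auxiliary paths $R_{1,3}$ and $R_{2,1}$, and finish with the Blow-up Lemma on the balanced four-partite skeleton; undoing the contractions yields a Hamiltonian cycle of $D'$.

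The main obstacle is the extremal case. The vertex $x$ has semidegree only $4\alpha$, which is exactly the threshold needed for the pigeonhole step above; in particular $x$ is neither circular nor necessarily \emph{good} in the sense of Section 4, and hence must be handled as one of the at most $O(\sqrt{\mu}n)$ non-circular vertices. Verifying that the sixteen patterns of Definition \ref{def3} exhaust all possible $(i^*, j^*)$ combinations is the crucial combinatorial content, while the auxiliary tasks (choosing disjoint circular in-/out-neighbours in the correct classes and restoring the $|D_2|=|D_4|$ balance after inserting $x$, possibly by resetting at most one further vertex in the spirit of Lemma \ref{lemma7}) are routine given the slack provided by the abundance of arcs within each pair $(D_i, D_j)$.
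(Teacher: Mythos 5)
Your high-level architecture matches the paper's: apply the Diregularity Lemma to $D$ with $x$ placed in the exceptional set, branch on whether the reduced graph $R$ is a robust outexpander via Lemma~\ref{lemma3}(ii), and in the extremal case run a pigeonhole argument to place $x$ acceptably in one of the four classes. Your four-way case check ($r=1,2,3,4$ against the pairs $(j^*,i^*)$) is exactly the paper's Claim~\ref{claim1} and is correct.

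Two issues, one minor and one substantive. The minor one is in the non-extremal branch: your primary suggestion---find an arc $uv$ with $u\in N^-(x)$, $v\in N^+(x)$ and route a Hamiltonian cycle of $D$ through $uv$---is not justified, because there need not be any arc from $N^-(x)$ to $N^+(x)$ when both sets have size only about $n/100$ (one can have all arcs between these two sets oriented from $N^+(x)$ to $N^-(x)$ while keeping $\delta^0(D)\geq 3n/8$). The paper avoids this entirely: since $4\alpha\geq 2\eta n$, one just applies Lemma~\ref{lemma3}(i) to $D'$ directly with $x$ inside the excluded set $L$, which is essentially your stated ``alternative''; there is no need to force a prescribed arc.

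The substantive gap is in the extremal branch. Lemma~\ref{lemma6} requires $|D_2|=|D_4|$, and you assert that restoring this balance after inserting $x$ is ``routine'', possibly by resetting one vertex ``in the spirit of Lemma~\ref{lemma7}''. This is the heart of the problem and is not routine. Lemma~\ref{lemma7} requires $\delta^0\geq(3n-4)/8$, which $D$ has but $D'$ does not (the new vertex $x$ has semidegree only $4\alpha\ll 3n/8$), so the rebalancing machinery cannot simply be applied to $D'$. The paper instead argues by contradiction: assuming $|D_2|\neq|D_4|$ cannot be corrected, it introduces the notion of \emph{good} vertices, moves bad vertices across classes (Subclaims~\ref{claim33},~\ref{claim4}), builds a maximum matching $M$ in an auxiliary digraph $H'$ of ``wrong-way'' arcs, and carries out the degree-counting estimate culminating in inequality~(\ref{9}) to derive a contradiction with $\delta^0(D)\geq 3n/8$. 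This matching-plus-pigeonhole counting is Claim~\ref{claim5} and is precisely where the semidegree hypothesis on the \emph{original} graph $D$ is used in an essential way. Without an argument of this type, your proof does not explain why a balanced partition exists after $x$ is inserted, and hence Lemma~\ref{lemma6} and the Blow-up Lemma cannot legitimately be invoked.
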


\begin{proof}	
Suppose, contrary to our lemma, that $D^\prime$ is not Hamiltonian. Define constants $M', \varepsilon, d, \mu, \tau, \eta, n_0$ satisfying
$n^{-1/6}_0\ll 1/M'\ll \varepsilon \ll d \ll \mu \ll \tau \ll 1/100.$
%\begin{equation}\label{6}
%\begin{aligned}
%
%\end{aligned}
%\end{equation}
Applying Diregularity Lemma with parameters $\varepsilon^2, d, M^\prime$ for the digraph $D$, this yields a partition $V_0,V_1,\ldots,V_k$ and there is a corresponding reduced oriented graph $R$ with $\delta^0(R)\geq (3/8-3d)|R|$ by Lemma \ref{lem2}. If $R$ is a robust $(\mu, 1/3)$-outexpander, then $R$ is a robust $(\mu, \tau)$-outexpander by Fact \ref{fac1}. Put $x$ into the exception set $V_0$. Lemma \ref{main2} holds as Lemma \ref{lemma3} ($i$), a contradiction. Otherwise, $R$ is not a robust $(\mu, 1/3)$-outexpander, then $D\in \mathcal{F}$ with a partition $(D_1,D_2,D_3,D_4)$ by Lemma \ref{lemma3} ($ii$).

%\begin{claim}\label{claim1}
%There is no part $D_l$ such that $|D_2|=|D_4|$ in $D^\prime$ after placing the vertex $x$ into $D_l$ as an acceptable vertex, $l\in [4]$.
%\end{claim}
% and only resetting at most $300\sqrt{\mu}n$ vertices

 %We now break the proof up into two cases, depending on whether or not $|D_2|=|D_4|$ in $D$.

%\textbf{Case 1. When $|D_2|\neq |D_4|$ in $D$.}

%Without lose of generality, suppose that $|D_2|> |D_4|$ in $D$. There exists a $D_k$ such that the vertex $x$ is a good vertex in $D_k$.
\begin{claim}\label{claim1}
There is a part $D_r$ such that we can put $x$ into $D_r$ to be an acceptable vertex, $r\in [4]$. %Moreover, $|D_2|\neq |D_4|$ in $D^\prime$ after putting the vertex $x$ into $D_r$.
\end{claim}

\begin{proof}
Owing to the semidegree of $x$, there exist two parts $D_i$ and $D_j$ such that $|N^+(x)\cap D_i|>\alpha$ and $|N^-(x)\cap D_j|>\alpha$, $i,j\in [4]$. Therefore, the choice of $D_r$ depends on the following circumstances:

$\bullet$ When $i\in \{1,2\}$, if $j\in \{1,4\}$ then $r=1$, otherwise, $r=4$.

$\bullet$ When $i\in \{3,4\}$, if $j\in \{2,3\}$ then $r=3$, otherwise, $r=2$.\\
It is easy to check that $x$ is an acceptable vertex in $D_r$.
\end{proof}
%By Claim \ref{claim1}, $|D_2|\neq |D_4|$ in $D^\prime$ after putting the vertex $x$ into $D_r$.

Now, all vertices in $D^\prime$ are acceptable. In addition, it is not difficult to find $D^\prime$ belongs to $\mathcal{F}$. Indeed, if $|D_2|=|D_4|$ in $D^\prime$ by few arrangement, then $D^\prime$ satisfies the hypothesis of Lemma \ref{lemma6}. Thus, $D^\prime$ is Hamiltonian, a contradiction. Therefore, $|D_2|\neq |D_4|$ always hold in the following procedures as we can't move more than $2(|D_2|-|D_4|)$ vertices. Without lose of generality, suppose that $|D_2|> |D_4|$ in $D^\prime$. In the following, we only need to prove that there is a contradiction with the semidegree of $D$. Its proof is the same as Claims 2.5-2.7 in \cite{Keevash(2009)}, but for the sake of completeness of proof, we still give the complete proof here.

\begin{claim}\label{claim5}
\cite{Keevash(2009)} All vertices in $D^\prime$ will become good by some arrangement. And then there is a contradiction with the semidegree of $D$.
\end{claim}

\begin{proof}
Set $s=|D_2|-|D_4|$. Note that we put a vertex $v$ into $D_4$ to be an acceptable vertex, then $v$ is a good vertex by the definition of good vertex.
% It only needs to consider the cases which we put $x$ into a part $D_l(\neq D_4)$.

\begin{subclaim}\label{claim33}
There are at most $s-1$ bad vertices in $D_1 \cup D_3$. Moreover, we can arrange them to be good vertices by moving them to $D_4$.
\end{subclaim}

\begin{proof}
If $v$ is a bad vertex in $D_1$, we can obtain that $|N^-(v)\cap D_2|\geq \beta$ or $|N^-(v)\cap D_3|\geq \beta$. It follows from $v$ is an acceptable vertex that $|N^+(v)\cap D_1|\geq \alpha$ or $|N^+(v)\cap D_2|\geq \alpha$. It is easy to check that we move the vertex $v$ to $D_4$ then $v$ is an acceptable vertex. Furthermore, it is good. Similarly for the case $v\in D_3$. Thus, if there are $s$ bad vertices in $D_1\cup D_3$, then we can arrange them to be good vertices by moving them to $D_4$. This yields that $|D_2|=|D_4|$, a contradiction.
\end{proof}

\begin{subclaim}\label{claim4}
 All bad vertices in $D_2$ will become acceptable in $D_1\cup D_3$ by some arrangement.
\end{subclaim}

\begin{proof}
Suppose that $v$ is not a good vertex in $D_2$. Then $v$ satisfies at least one of properties $D_2:(D_1)^{>\beta}$, $D_2:(D_2)^{>\beta}$, $D_2:(D_2)_{>\beta}$ and $D_2:(D_3)_{>\beta}$. By the acceptability of $v$, we get that $v$ has a large in-neighbourhood in $D_1$ or $D_4$ and a large out-neighbourhood in $D_2$ or $D_3$. Hence, if $v$ has properties $D_2:(D_1)^{>\beta}$ or $D_2:(D_2)^{>\beta}$, then it is also an acceptable vertex in $D_1$. So we move the vertex $v$ to the part $D_1$. And if $v$ has properties $D_2:(D_2)_{>\beta}$ or $D_2:(D_3)_{>\beta}$, then $v$ is also an acceptable vertex in $D_3$. In this case, we move the vertex $v$ to $D_3$. By Subclaim \ref{claim33}, we can arrange $v$ to be good in some part $D_k$.
\end{proof}

%So far, the vertex $x$ is a good vertex by our arrangement. Also, $|D_2|\neq |D_4|$ in $D^\prime$. Without loss of generality, assume $|D_2|>|D_4|$ . Indeed, the above analysis also shows that  All bad vertices in $D_2$ will become good by some arrangement as in Claim \ref{claim2}.
Note that $|D_2|\neq |D_4|$ during the whole process. However, it may happen that $|D_2|- |D_4|$ goes from $+1$ to $-1$ if a vertex $v$ is moved from $D_2$ to $D_4$. In this case we can put $v$ into $D_1$ or $D_3$ to be an acceptable vertex, by Claim \ref{claim4}. This yields that $|D_2|=|D_4|$ in $D^\prime$, a contradiction. Thereby all vertices are good by our arrangement and $|D_2|>|D_4|$ in $D^\prime$.

Next, we will show that there is no arc from $D_3$ to $D_2$ and from $D_2\cup D_3$ to $D_1$ and in $D_2$. Construct a spanning oriented graph $H^{\prime}$ of $D^{\prime}$ whose arc set is $A(H^{\prime})=A(D_2, D_1)\cup A(D_2)\cup A(D_3, D_1)\cup A(D_3, D_2)$. Let $M$ be a maximum matching in $H^{\prime}$.
For each $i\in [3]$, suppose $L_i=V(M)\cap D_i$.
If $a(M)\geq s$, we could extend $s$ arcs of $M$ to $s$ disjoint paths such that the difference between $|D_2|$ and $|D_4|$ decreased by $s$ by contracting these paths. Then the oriented graph $D^{\prime \prime}$ obtained by contracting these paths from $D^{\prime}$ has $|D_2|=|D_4|$. By Lemma \ref{lemma6}, $D^{\prime \prime}$ is Hamiltonian. Further, $D^{\prime}$ is Hamiltonian, a contradiction. Hence, $a(M)< s$.

Then we will claim that $a(M) = 0$. Assume to the contrary that $a(M)\geq 1$. The maximality of $M$ and the definition of the good vertex imply
\begin{equation*}\label{9}
\begin{aligned}
a(D_3, D_1)&\leq a(L_3, D_1) + a(D_3, L_1 )\\
&\leq (|L_3| + |L_1|)\beta \leq a(M)|D_1|/45.
\end{aligned}
\end{equation*}
Similarly, we also can obtain that
$$a(D_2,D_1)\leq a(L_2, D_1) + a(D_2, L_1) < a(M)|D_1|/45,$$
$$a(D_3,D_2)\leq a(L_3, D_2) + a(D_3, L_2 ) < a(M)|D_3|/45,$$
$$a(D_2)\leq |L_2|\cdot 2\beta <2a(M)|D_2|/45.$$

Hence, it follows that
$$\sum_{v\in D_1} d^-(v)\leq \dfrac{|D_1|\cdot (|D_1-1|)}{2}+ 2a(M)|D_1|/45+ |D_1|\cdot |D_4|,$$
$$\sum_{v\in D_2} d(v)\leq (|D_1|+|D_3|)\cdot |D_2| + 4a(M)|D_2|/45+|D_2|\cdot |D_4|,$$
$$\sum_{v\in D_3} d^+(v)\leq \dfrac{|D_3|\cdot (|D_3-1|)}{2}+ 2a(M)|D_3|/45+ |D_3|\cdot |D_4|.$$

Without loss of generality, assume that the vertex $x$ is a good vertex in $D_1$. By pigeonhole principle, there are three vertices $u\in D_1, v\in D_2, w\in D_3$ with $u\neq x$ such that
$$d^-(u)<\dfrac{|D_1|}{2}+\dfrac{|D_1|\cdot|D_4|}{|D_1|-1}+\dfrac{2a(M)|D_1|}{45\cdot (|D_1|-1)},$$
$$d(v)<|D_1|+|D_3|+|D_4|+ 4a(M)/45,$$
$$d^+(w)<|D_3|/2 + |D_4| + 2a(M)/45.$$
Together with $|D_1|+|D_2|+|D_3|+|D_4|\leq n$, it yields that
\begin{equation}\label{9}
\begin{aligned}
\dfrac{3n}{2}&\leq d^-(u) + d^+(w) + d(v) \\
&<\dfrac{3}{2}(n-|D_2| + |D_4|) + 2|D_4|+ \dfrac{|D_1|\cdot|D_4|}{|D_1|-1} + \dfrac{6a(M)}{45}+\dfrac{2a(M)|D_1|}{45(|D_1|-1)}\\
&<\dfrac{3}{2}(n- |D_2|+ \dfrac{|D_4|\cdot |D_1|}{|D_1|-1})+\dfrac{6a(M)}{45}+\dfrac{2a(M)|D_1|}{45(|D_1|-1)}.
\end{aligned}
\end{equation}

Recall that $|D_2| - |D_4|=s>a(M)$, so $|D_2|- \dfrac{|D_4|\cdot |D_1|}{|D_1|-1}> a(M)$ by simple calculation. Thus (\ref{9}) gives that
$$\dfrac{3}{2}\cdot a(M)< \dfrac{6a(M)}{45}+\dfrac{2a(M)|D_1|}{45(|D_1|-1)}.$$
This is impossible, so $a(M)=0$. Further, (\ref{9}) also implies that $|D_2|- \dfrac{|D_4|\cdot |D_1|}{|D_1|-1}<0$. Namely, $|D_2|-|D_4|=0$. It contradicts the assumption, and thus the proof is completed.
\end{proof}
\end{proof}

%Hence, there must be $|D_2|=|D_4|$ in $H^\prime$. It contradicts Claim \ref{claim1}.

\textbf{Proof of Theorem \ref{cor2}.}	
Suppose $D$ is an oriented graph on $n\geq n_0$ vertices with $\delta^0(D)\geq 3n/8+2$. And assume that we need find a Hamiltonian path from $u$ to $v$. If $N_D^+(u)\cap N_D^-(v)$ is not an empty set, divide $N_D^+(u)\cap N_D^-(v)$ equally into two parts $N_u,N_v$. Next, an auxiliary oriented graph $H$ is obtained by removing the vertices $u,v$ from $D$, then add a vertex $w$ with $N_H^+(w)= N_D^+(u)\setminus  N_v$ and $ N_H^-(w)= N_D^-(v)\setminus N_u$. Then semidegree of $w$ in $H$ is at least $3n/16$ and $\delta^0(H-\{w\})\geq 3n/8$, which is clear from $\delta^0(D)\geq 3n/8+2$. Thus $H$ contains a Hamiltonian cycle since Lemma \ref{main2}. Restore the vertex $w$ into the vertices $u,v$. It means that there is a Hamiltonian path from $u$ to $v$ in $D$. The theorem is established by the arbitrariness of $u$ and $v$.
\hfill $\Box$

\section{Proof of Theorem \ref{cor3}.}
Before proving Theorem \ref{cor3}, we need to introduce more concepts. Denote the \emph{distance} of two vertices $x,y$ to be the length of a shortest path from $x$ to $y$. In 2008, Lichiardopol \cite{Lichiardopol(2008)} showed that the diameter of an oriented graph $D$ of order $n$ with $\delta^0(D)\geq n/3$ is at most 5.

\textbf{Proof of Theorem \ref{cor3}.}	
%We proof Corollary \ref{cor3} by induction on $k$. There is nothing to prove for $k=1,2$. Assume the conclusion holds for $k-1$, we will prove it for $k$.
($i$) Assume the sequence $s_1,\ldots,s_k$ of distinct vertices of $D$ is the order which we need to encounter by a Hamiltonian cycle. We proof Theorem \ref{cor3}$(i)$ by finding a path $P$ from $s_1$ to $s_k$ whose length is at most $4k-3$. For each $j\in [k]$, there exists a path $P_i$ form $s_i$ to $s_{i+1}$ whose length is at most $5$ and avoids the vertices $\{s_1,s_2,\ldots,s_{i-1},s_{i+2},\ldots,s_k\}\cup \bigcup_{j=1}^{i-1}P_j$ since $\delta^0(D-\bigcup^k_{j=1}P_j)\geq 3n/8+5k/2-2-(4k-3)> n/3.$ Therefore, there is a path $P$ from $s_1$ to $s_k$ and encounters each $s_i$ in the order. Apparently, the length of $P$ is at most $4k-3$. Let $H$ be the oriented graph that remove the vertices $P\setminus \{s_1,s_k\}$ from $D$. Owing to $\delta^0(D-P\setminus \{s_1,s_k\})\geq 3n/8+5k/2-2-(4k-3)> 3(n-4k-3)/8+2$ and Theorem \ref{cor2}, there is a Hamiltonian path from $s_k$ to $s_1$ in $H$. This implies that $D$ is $k$-ordered Hamiltonian.

%We proof Corollary \ref{cor3} by induction on $k$. There is nothing to prove for $k=1,2$. Assume the conclusion holds for $k-1$, we will prove it for $k$.
($ii$) Assume $X=\{x_1,x_2,\ldots,x_{k}\}$, $Y=\{y_1,y_2,\ldots,y_{k}\}$ be $2k$ vertices to be linked. Since $\delta^0(D-\bigcup^k_{j=1}P_j)\geq 3n/8+7k/2-1-5k> n/3$, there exists a path $P_i$ from $x_i$ to $y_{i}$ of length at most $5$ and avoiding the vertices $(X\cup Y\cup \bigcup_{j=1}^{i-1}P_j)\setminus\{x_i,y_i\}$, for each $i\in [k-1]$. This yields that the semidegree of $D-\bigcup_{j=1}^{k-1}P_j$ is at least $3n/8+7k/2-1-5(k-1)\geq 3(n-5k+5)/8+2$. Together with Theorem \ref{cor2}, we get that $D-\bigcup_{j=1}^{k-1}P_j$ contains a Hamiltonian path from $x_k$ to $y_k$. Hence $D$ is spanning $k$-linked.
\hfill $\Box$

\section{Remark}
In 1993, H\"{a}ggkvist \cite{Haggkvist} also made the following conjecture. Here, define $\delta^*(D)= \delta(D) + \delta^+(D) + \delta^-(D)$.

\begin{conjecture}\label{conj1}
Every oriented graph $D$ with $\delta^*(D) > (3n - 3)/2$ is Hamiltonian.
\end{conjecture}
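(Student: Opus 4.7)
The plan is to mirror the architecture behind Theorem~\ref{the1}: apply the Diregularity Lemma, use Lemma~\ref{lem2} to pass to a reduced oriented graph $R$ inheriting the degree condition, and then split into the case where $R$ is a robust $(\mu,1/3)$-outexpander (which would give a Hamilton cycle via Lemma~\ref{lemma3}(i)) and the case where $R$ fails to expand robustly (forcing $D$ into an extremal family, which must then be shown to be Hamiltonian by an explicit construction). The key departure from the $\delta^0$-setting is that $\delta^*(D)>(3n-3)/2$ permits $\delta^+(D)$ and $\delta^-(D)$ to be very unbalanced, provided $\delta(D)$ compensates, so the argument must cope with oriented graphs that are ``lopsided'' between in- and out-degrees.

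The first step is to handle vertices whose out- or in-degree is far below $(3n-4)/8$. If $\delta^+(D)$ is small, then $\delta(D)+\delta^-(D)$ must be close to $3n/2$, which forces the in-degrees of every vertex and hence the total degree of most vertices to be close to $3n/4$. In this regime the underlying digraph is nearly a semicomplete digraph on a large spanning subset, and one should be able to build a Hamilton cycle by absorbing the few small-out-degree vertices using their plentiful in-neighbours; a symmetric argument applies when $\delta^-(D)$ is small. After this reduction one may assume $\min\{\delta^+(D),\delta^-(D)\}\geq \gamma n$ for some small constant $\gamma>0$, so Diregularity-style machinery is available.

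The second step is to apply Lemma~\ref{lem1} with parameters $\varepsilon\ll d\ll \mu\ll 1$, and show that the reduced oriented graph $R$ inherits the normalized $\delta^*$ bound up to $O(\varepsilon+d)$. If $R$ is a robust $(\mu,1/3)$-outexpander, then Fact~\ref{fac1} and Lemma~\ref{lemma3}(i) deliver a Hamilton cycle. Otherwise, one must generalize Lemma~\ref{lemma3}(ii) to classify the extremal structures under the weaker $\delta^*$ condition. In the known extremal construction of \cite{Keevash(2009)} the four part sizes are all close to $n/4$, and a similar rigidity should emerge here, though the relative sizes and degree-patterns of the parts might no longer be symmetric.

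The main obstacle is this extremal classification. Without the symmetry between $\delta^+$ and $\delta^-$, the family $\mathcal{F}$ of Definition~\ref{def4} is unlikely to capture every tight configuration, and new configurations featuring ``in-biased'' and ``out-biased'' parts of different sizes may appear; constructing such tight examples (or proving their non-existence) is itself nontrivial. Once the extremal list is in hand, the contraction-based strategy of Lemma~\ref{lemma6} — arranging short paths so that the blown-up $4$-cycle becomes super-regular and then invoking Lemma~\ref{lem3} — should adapt to each case, but the analogue of Claim~\ref{claim5}, which rearranges bad vertices into good parts, will require a new accounting that tracks $\delta^*$ rather than the uniform semidegree. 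I would expect that step to determine whether the full bound $(3n-3)/2$ is really tight or whether some slack must be absorbed into lower-order terms.
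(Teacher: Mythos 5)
This statement is not a result of the paper at all: it is H\"{a}ggkvist's conjecture from 1993, which remains open. The paper records it only to contextualize the semidegree setting, and immediately notes that the strongest known progress is the \emph{approximate} version from \cite{Kelly(2008)} ($\delta^*(D)\geq(3/2+o(1))n$ implies Hamiltonicity). So there is no ``paper's own proof'' to compare against, and what you have written is a strategy outline rather than an argument.

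That said, the outline has genuine gaps even as a roadmap. First, the reduction step for lopsided vertices is not justified. From $\delta^*(D)>(3n-3)/2$ and $\delta(D)\leq n-1$ you can deduce that if $\delta^+(D)$ is small then $\delta^-(D)>(n-1)/2$ and $\delta(D)$ is close to $n-1$, so $D$ is near-tournament-like --- but ``nearly semicomplete, so absorb the bad vertices'' is an assertion, not a proof; you would need a concrete absorption lemma, a strong-connectivity check, and an account of how absorbing a vertex with tiny out-degree preserves the remaining structure. Second, and more fundamentally, the whole scheme hinges on classifying the extremal configurations under the $\delta^*$ hypothesis, i.e.\ on a genuine analogue of Lemma~\ref{lemma3}(ii). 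The family $\mathcal{F}$ of Definition~\ref{def4} is tailored to the symmetric condition $\delta^0(D)\geq(3n-4)/8$; under $\delta^*(D)>(3n-3)/2$ the part sizes and degree patterns need not be symmetric, and you acknowledge you do not know what the tight configurations look like. Note also that the $\delta^*$ condition does \emph{not} imply $\delta^0(D)\geq(3n-4)/8$ (take $\delta^+(D)\approx n/4$, $\delta^-(D)\approx n/2$, $\delta(D)\approx 3n/4$), so you cannot fall back on Theorem~\ref{the1} in the non-extremal case without first re-deriving robust expansion of the reduced digraph directly from $\delta^*$, which is itself nontrivial because $\delta^*$ mixes three different minima attained at possibly different vertices. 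Until the extremal classification and the robust-expansion step are actually carried out, the conjecture remains exactly that.
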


In \cite{Kelly(2008)}, this conjecture was verified approximately; that is, if $\delta^*(D)\geq (3/2 + o(1))n$, then the oriented graph $D$ is Hamiltonian. In addition, notice that in Theorem \ref{main1}, we only get any cycle-factor with constant cycles, which implies that there must exist a long cycle whose length at least $n/t$ in any cycle-factor. Sudakov \cite{Sudakov(2009)} described an infinite class of tournaments $T$ of order $n$ with $\delta^0(T)\geq (n-1)/2 - 1$ which are not have a perfect packing of cyclic triangles. Naturally, we put forward the following question.
%not in any hamiltonian cycle.A example which showed that there is a tournament $T$ of order $n$ with $\delta^0(T)\geq (n-1)/2 - 1$ does not have a perfect packing of cyclic triangles is given in \cite{Sudakov(2009)}.

\begin{problem}
Whether there is any cycle-factor with $t$ cycles in an oriented graph $D$ on $n$ vertices with $\delta^0(D)\geq (3n-4)/8$ (resp. $\delta^*(D) > (3n - 3)/2$), for any natural number $t<n/3$?
\end{problem}

Finally, we can also consider the linkage problem in oriented graphs. Indeed, it is easy to check that every oriented graph $D$ on $n$ vertices with $\delta^0(D)\geq n/3+5k-5$ is $k$-linked. On the other hand we can construct an oriented graph $D$ on $n$ vertices with $\delta^0(D)\geq n/4+3k/2-5/2$, but not $k$-linked.

Let $D$ be an oriented graph on vertex set $V =B\cup C\cup X\cup Y$, where $X = \{x_1,\ldots,x_k\}$, $Y = \{y_1,\ldots,y_k\}$ and $|B| = |C | = (n-2k)/2$ and whose arcs are listed in the following ways.\\
(1) The arcs in both $X$ and $Y$ are oriented arbitrarily.\\
(2) The arcs in both $B$ and $C$ are oriented so that $D[B]$ and $D[C]$ form a regular tournament.\\
(3) All arcs are oriented from $\{x_k,y_k\}\cup C$ to $B$, from $B$ to $(X\cup Y)\setminus \{x_k,y_k\}$, from $ C$ to $\{x_k,y_k\}$ and from $(X\cup Y)\setminus \{x_k,y_k\}$ to $C$.\\
(4) All arcs are oriented from $X$ to $Y$ except for arcs $y_ix_i$ for each $i\in [k]$.\\
\begin{figure}[H]
\centering    %居中       %子图居中
   \includegraphics[scale=1.1]{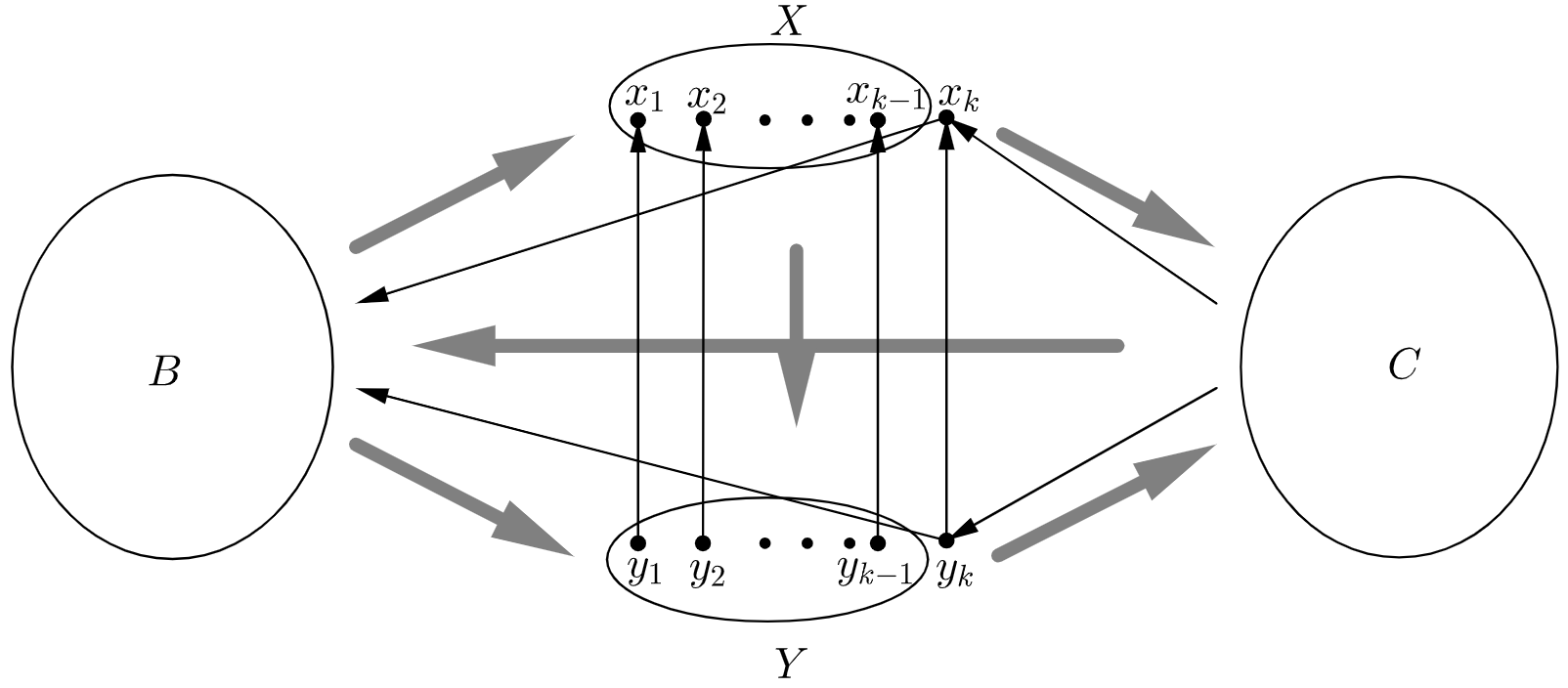}   %以pic.jpg的0.5倍大小输出
\caption{An oriented graph $D$ with $\delta^0(D)\geq n/4+3k/2-5/2$ that is not $k$-linked.}
\label{tu2}
\end{figure}

First, we shall show that $\delta^0(D)\geq n/4+3k/2-5/2$. It is easy to check that every vertex in $\{x_1,\ldots,x_{k-1}, y_1,\ldots,y_{k-1}\}$ has out-neighbourhood $C$ and in-neighbourhood $B$. This gives that its semidegree is at least $ (n-2k)/2$. Similarly, the semidegree of $x_k$ (resp. $y_k$) is at least $ (n-2k)/2$.
Therefore, it remains to show that every vertex in $B\cup C$ has semidegree at least $n/4+3k/2-5/2$. By the construction of (2)-(3), for   every vertex $x\in B\cup C$, there is
$$d^{\sigma}(x)\geq \dfrac{(n-2k)/2-1}{2}+2(k-1)\geq n/4+3k/2-5/2, \sigma\in \{+,-\}.$$
But it is not difficult to see that $D$ does not contain an ($x_k$, $y_k$)-path which avoids $X\cup Y$.
%Recall that $D[B]$, $D[C]$ form a regular tournament and $(X\cup Y)\setminus \{x_k,y_k\}$

Therefore, we provide the following problem.
\begin{problem}
Whether there is a constant $c$ such that every oriented graph $D$ on $n$ vertices with $\delta^0(D)\geq n/4+ck$ is $k$-linked?
\end{problem}

\end{document}